\newcommand{\blackhyperref}[2]{%
  {%
    \hypersetup{linkcolor=black}%
    \hyperref[#1]{#2}%
  }%
}
\def\T {{\mathbb T}}
\def\ii {\mathrm{i}}
\def\ee {\mathrm{e}}
\def\R {\mathbb{R}}
\def\N {\mathbb{N}}
\def\Z {\mathbb{Z}}
\def\d{{\,\rm d}}
\numberwithin{equation}{section}
\newtheorem{theorem}{Theorem}[section]
\newtheorem{lemma}[theorem]{Lemma}
\newtheorem{proposition}[theorem]{Proposition}
\newtheorem{remark}[theorem]{Remark}
\theoremstyle{definition}
\newtheorem{definition}[theorem]{Definition}
\newcommand{\supp}{\operatorname{supp}}
\tikzstyle{idea} = [rectangle, rounded corners, minimum width=2cm, minimum height=1cm, text centered, draw=black, align=center]
\tikzstyle{process} = [rectangle, minimum width=3cm, minimum height=1cm, text centered, draw=black, align=center]
\tikzstyle{point} = [coordinate, on grid]
\tikzstyle{arrow} = [thick,->,>=stealth]
\tikzstyle{dasharrow} = [dashed,->,>=stealth]
\title[Symmetry conditions for wave observability]{\large Symmetry conditions for spacetime observability of  wave equations on the torus }
\author[Jingrui Niu]{Jingrui Niu}
\author[Ming Wang]{Ming Wang}
\author[Shengquan Xiang]{Shengquan Xiang}
\address[Jingrui Niu]{Sorbonne Université, CNRS, Université Paris Cité, Inria Team CAGE, Laboratoire Jacques-Louis Lions (LJLL), F-75005 Paris, France
}
\email{jingrui.niu@sorbonne-universite.fr}
\address[Ming Wang]{School of Mathematics and Statistics, HNP-LAMA, Central South University, Chang-sha, Hunan 410083, P.R. China}
 \email{m.wang@csu.edu.cn}
\address[Shengquan  Xiang]{School of Mathematical Sciences, Peking University, 100871, Beijing, China.}
\email{shengquan.xiang@math.pku.edu.cn}
\begin{document}

        \subjclass[2020]{ 35L05,%Wave equation
      \, 35Q93,%(2010-now) PDEs in connection with control and optimization
      \, 35B60,  % Continuation and prolongation of solutions to PDEs
  \,93B07% Observability
 } 
	
\keywords{Symmetry, GCC,
wave equations,
spacetime observability,
unique continuation}

    \vspace{5mm}
\begin{abstract}
We study observability for the one-dimensional wave equation on the torus from spacetime measurable observation sets. While the Geometric Control Condition (GCC) provides a sufficient criterion in many classical settings, it is no longer sufficient in this framework.

We construct explicit counterexamples showing the failure of observability despite the validity of GCC. This leads to the introduction of an additional symmetry condition on the observation set, referred to as the Observable Symmetry Condition (OSC).

We prove that observability holds if and only if both GCC and OSC are satisfied. We also show that unique continuation holds if and only if both OSC and a weak form of GCC are satisfied.
\end{abstract}

 \maketitle

\section{Introduction}
Let $T > 0$ and let $G \subset [0, T]\times [0, 2\pi]$ be a spacetime measurable set.
We consider the {\it observability}  of the wave equation on the observation set $G$ in the energy space  $H^1(\T)\times L^2(\T)$, that is, whether there exists a constant $C>0$ such that every solution $u$ of 
\begin{equation}\label{eq: wave-eq-0}
(\partial_t^2-\partial_x^2)u=0,\quad (u,\partial_tu)\big|_{t=0}=(u_0,u_1)\in H^1(\T)\times L^2(\T)
\end{equation}
satisfies
\begin{equation}\label{eq: wave-ob}
\|\partial_x u_0\|^2_{L^2(\T)}+\|u_1\|^2_{L^2(\T)}\leq C\iint_G|\partial_tu(t,x)|^2\d x\d t.
\end{equation}

By the classical Hilbert uniqueness method due to Lions \cite{Lions-book}, the observability is equivalent to the exact controllability of the wave equation; see, for instance, \cite{Coron-book, Tucsnak2009book}.

\subsection{Background and setting}
The study of observability and controllability for wave equations dates back to the works of Russell \cite{Russell-1} and Lions \cite{Lions-book}.

\subsubsection{Geometric control condition}
In the pioneering work of Bardos--Lebeau--Rauch \cite{BLR-gcc}, observability for wave equations was related to geometric properties of the observation region through the introduction of the geometric control condition (GCC): for an open set $\omega$, every generalized ray meets $\omega$ in finite time.

In the classical setting of cylindrical domains $(0,T)\times\omega$ with $\omega$ open, GCC is known to be necessary and sufficient for observability; see \cite{BLR-gcc,BG-97}. Related results also appear in regional observability, stabilization, optimization, inverse problems, memory kernel, and numerical studies; see, for instance,  \cite{Lebeau-1996, Burq-98, zuazua-review, BHHR2015, Trelat, Burq-Gerard-2020, Krieger-Xiang-2024, WangZhangZuazua, Dehman-Sylvain-Zuazua}.

\subsubsection{Unique continuation}
A qualitative counterpart of observability is the unique continuation property.
Classical approaches rely on analyticity arguments, such as Holmgren's theorem, as well as on pseudo-convexity conditions and Carleman estimates; see, for instance, \cite{RZ-98,Tataru,Hormander-96}. More recent developments in this direction include \cite{Laurent-Leautaud-2019,Shao}.
In the spacetime setting, unique continuation involves propagation across spacetime surfaces, which is more delicate than in cylindrical geometries.

\subsubsection{Spacetime measurable observation set} 
 In this paper, we consider the setting where $G\subset[0,T]\times [0, 2\pi]$ is a measurable set.  Related problems in spacetime measurable frameworks have been studied in various contexts; see \cite{Castro-Cindea-Munch-2014,RouLebeauAnalPDE2017,Shao,PK}. The following definition is a natural analogue of the geometric control condition in this setting.

\vspace{2mm}
\begin{enumerate}
\item[({\bf GCC})]\label{def:GCC} 
Let $T>0$. A measurable set $G\subset [0, T]\times [0, 2\pi]$ is said to satisfy GCC if there exists a constant 
$c_0 > 0$ such that for almost every $x \in [0, 2\pi]$,
\begin{align*}
    \mbox{meas}_\R\big( G \cap \{ (s,x+ s) : s\in [0, T]\}\big)&\geq c_0, \\
   \mbox{meas}_\R\big( G \cap \{ (s,x- s) : s\in [0, T]\}\big)&\geq c_0, 
\end{align*}
where we identify $(t,x+2k\pi)$ with $(t,x)$ for all $k\in\Z$.
\end{enumerate}
\vspace{2mm}

Throughout the paper, \blackhyperref{def:GCC}{({\rm GCC})}  refers to this definition.

\subsection{The counterexample}\label{Sec:int:example}

 Let $T= 2\pi$. Let $G\subset [0, 2\pi]\times [0, 2\pi]$ be given in Figure \ref{fig:GCCfail1}. The set $G$ satisfies \blackhyperref{def:GCC}{({\rm GCC})}.  However, the observability inequality \eqref{eq: wave-ob}  fails on $G$; see Section \ref{sec:example}.

\begin{figure}[htp]
    \centering
\begin{tikzpicture}[scale=1.0]

  \draw[->] (-0.2,0) -- (6.2,0) node[right] {$x$};
  \draw[->] (0,-0.2) -- (0,6.2) node[above] {$t$};

  \draw (3,0) node[below] {$\pi$} -- (3,0.1);
  \draw (6,0) node[below] {$2\pi$} -- (6,0.1);
  \draw (0,3) node[left] {$\pi$} -- (0.1,3);
  \draw (0,6) node[left] {$2\pi$} -- (0.1,6);

  \draw (0,0) -- (6,0) -- (6,6) -- (0,6) -- cycle;

\fill[blue!30] (0,0) coordinate (A1) -- (3,0) coordinate (B1) -- (1.5,1.5) coordinate (C1)-- cycle;

\node at (barycentric cs:A1=1,B1=1,C1=1) {\textcolor{white}{-1}};

\fill[red!30] (3,0) coordinate (A2) -- (6,0) coordinate (B2) -- (4.5,1.5) coordinate (C2)-- cycle;

\node at (barycentric cs:A2=1,B2=1,C2=1) {\textcolor{white}{1}};

\fill[red!30] (0,3) coordinate (A3) -- (1.5,1.5) coordinate (B3) -- (3,3) coordinate (C3)--(1.5,4.5) coordinate (D3)-- cycle;

\node at (barycentric cs:A3=1,B3=1,C3=1,D3=1) {\textcolor{white}{1}};

\fill[blue!30] (3,3) coordinate (A4) -- (4.5,1.5) coordinate (B4) -- (6,3) coordinate (C4)--(4.5,4.5) coordinate (D4)-- cycle;

\node at (barycentric cs:A4=1,B4=1,C4=1,D4=1) {\textcolor{white}{-1}};

\fill[blue!30] (0,6) coordinate (A5) -- (1.5,4.5) coordinate (B5) -- (3,6) coordinate (C5)-- cycle;

\node at (barycentric cs:A5=1,B5=1,C5=1) {\textcolor{white}{-1}};

\fill[red!30] (3,6) coordinate (A6) -- (4.5,4.5) coordinate (B6) -- (6,6) coordinate (C6)-- cycle;

\node at (barycentric cs:A6=1,B6=1,C6=1) {\textcolor{white}{1}};

  \draw[dashed] (-0.5,3.5) -- (3.5,-0.5); % x+t=π
  \draw[dashed] (0,6) -- (6,0); % x+t=2π
  \draw[dashed] (2.5,6.5) -- (6.5,2.5); % x+t=3π

  \draw[dashed] (-0.5,2.5) -- (3.5,6.5); % t-x=π
  \draw[dashed] (0,0) -- (6,6); % t-x=0
  \draw[dashed] (2.5,-0.5) -- (6.5,3.5); % t-x=-π

\node at (10,4) {$G=G_{1}\cup G_{2}$};

\node at (10, 3) {$G_{1}=\{(x,t): u_\xi=u_\eta=-1\}$: blue part};

\node at (10, 2) {$G_2=\{(x,t): u_\xi=u_\eta=1\}$: red part};
  
\end{tikzpicture}
    \caption{Observability fails on $G$, though $G$ satisfies ({\rm GCC}). }
    \label{fig:GCCfail1}
\end{figure}
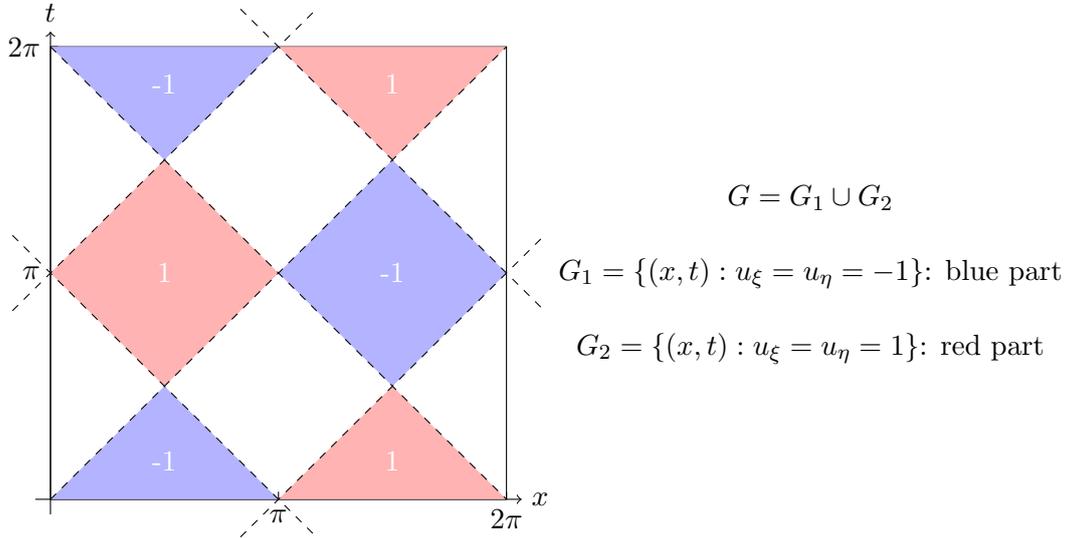

\subsection{Observable symmetry condition}

Extend the system $2\pi$-periodically to $x\in \R$,  and introduce the null coordinate:
\begin{equation}
    \xi=x+t \; \textrm{ and } \; \eta=x-t. 
\end{equation}
Under this new coordinate,  
\begin{equation*}
  2  \partial_{\xi} u= \partial_x u+ \partial_t u \; \textrm{ and } \;  2  \partial_{\eta} u= \partial_x u- \partial_t u,
\end{equation*}
and the wave equation \eqref{eq: wave-eq-0} becomes\footnote{We considered the equation on $U(\xi, \eta):= u(t, x)$, but still denote $U$ by $u$ to simplify the notation.  }
\begin{equation}
     \partial_{\xi} \partial_{\eta} u=  0. \notag
\end{equation}

For any $\xi_0\in\R$, we denote the union of lines as follows, and call it the $\xi$-characteristic,
\begin{equation}
  L_{\xi=\xi_0}:=  \bigcup_{k\in \mathbb{Z}} \{(t,x):x+t=\xi_0+ 2k\pi\}.
\end{equation}
Similarly, we denote the union of lines $\{(t,x):x- t=\eta_0+ 2k\pi, k\in \mathbb{Z}\}$ by $L_{\eta=\eta_0}$ and  call it the $\eta$-characteristic. Thus the set $G\subset [0, T]\times [0, 2\pi]$ satisfies \blackhyperref{def:GCC}{({\rm GCC})} is equivalent to
\begin{equation}\label{eq:GCC:measure}
    \exists c_0>0,   \textrm{ s.t. for } a.e. \; x\in [0, 2\pi],  \; \mbox{meas}_\R(G\cap L_{\xi= x})\geq c_0, \;\;  
  \mbox{meas}_\R(G\cap L_{\eta= x})\geq c_0. 
\end{equation} 

We further define measurable cylinders for any measurable sets $A, B\subset [0, 2\pi]$ as
\begin{equation}\label{eq: cylinder}
L_{\xi\in A}:=\bigcup_{\xi_0\in A} L_{\xi=\xi_0} \; \textrm{ and } \; L_{\eta\in B}:=\bigcup_{\eta_0\in B} L_{\eta=\eta_0}
\end{equation}
and define, for $T\geq 0$
\begin{equation}\label{eq: cylinder:T}
L_{\xi\in A}^T:=L_{\xi\in A}\cap \left([0, T]\times \R\right) \; \textrm{ and } \; L_{\eta\in B}^T:=L_{\eta\in B}\cap \left([0, T]\times \R\right).
\end{equation}

Motivated by the counterexample, we introduce the following geometric symmetry condition.
\begin{definition}
   Let $A, B\subset [0, 2\pi]$ be two measurable subsets. Let $0\leq S\leq T$. 
   A set $G\subset [0, T]\times [0, 2\pi]$ is said to be  ($A, B; S$)-{\it observable symmetric}  if
\begin{equation}\label{wave:nece:2}
 \mbox{meas}_{\R^2} \Big( [G\cap L_{\xi\in A}^S] \, \Delta \, [G\cap L_{\eta\in B}^S]\Big)=0,
\end{equation} 
where $X \Delta Y = (X\backslash Y)\cup (Y\backslash X)$ denotes the symmetric difference between $X$ and $Y$. A pair $(A, B)$ is called trivial, if $|A|= |B|= 0$ or $|A|= |B|= 2\pi$; Otherwise, it is called non-trivial.
\end{definition}
Expression \eqref{wave:nece:2} means that $G\cap L_{\xi\in A}^S = G\cap L_{\eta\in B}^S$ modulo zero measure set in $\R^2$. 
For instance, in the counterexample,  the set $G$ is ($A, B$)-observable symmetric for $A=B=(0,\pi)$ and for $A=B=(\pi,2\pi)$. 
 Note that any set $G$ is ($A, B; T$)-observable symmetric for trivial pairs. We will be concerned with {\it non-trivial} pairs $(A, B)$.

\vspace{2mm}
The first result is a new conservation law for the controlled wave equation,
\begin{equation}\label{eq: wave-eq}
(\partial_t^2-\partial_x^2)u =f \chi_G,\quad (u, \partial_t u)\big|_{t=0}\in H^1(\T)\times L^2(\T),
\end{equation}
where $G\subset [0, T]\times [0, 2\pi]$ is a measurable set and  $f\in L^2((0, T)\times \T)$ is the control function.
\begin{proposition}[Conservation law]
\label{prop-sym-conser}
Let $A,B\subset [0, 2\pi]$. Let $T>0$.  Assume that $G\subset [0, T]\times [0, 2\pi]$ is $(A, B; T)$-{\it observable symmetric}.
Let $(u,u_t)\in C([0,T];H^1(\T)\times L^2(\T))$ be a solution to the equation \eqref{eq: wave-eq} and define the quantity 
$$
I(t)= \int_{A-t}(\partial_x u+ \partial_t u)(t,x)\d x+\int_{B+t}(\partial_x u- \partial_t u)(t,x)\d x,
$$
 where $A-t = \{x-t \textrm{ mod } 2\pi: x \in A\}, B+t = \{x+t \textrm{ mod } 2\pi : x \in B\}$.
Then
$$
I(t)=I(0), \;  \forall t\in [0, T].
$$
In the case $|A|= |B|= 2\pi$, this conservation is reduced to the condition $\int_{\T} \partial_x u (t, x) \d x= 0$.
\end{proposition}

We are now ready to introduce the observable symmetry condition.
\vspace{2mm}

\begin{enumerate}
\item[({\bf OSC})]\label{def:OSC} 
Let $T>0$. A set $G\subset [0, T]\times [0, 2\pi]$ is said to satisfy the {\it observable symmetry condition } (OSC),  if for any non-trivial pair ($A, B$) the set $G$ is not ($A, B; T$)-observable symmetric. 
\end{enumerate}
\vspace{2mm}

This condition depends on $T$; we write $({\rm OSC})$ without $T$ to simplify the notation. As will be shown in Section~\ref{sec:symmetry}, the observable symmetry condition yields a {\it necessary condition} for the controllability and even for the unique continuation property of  \eqref{eq: wave-eq-0}.  

\subsection{Unique continuation property}
The second result is about the unique continuation property (UCP).  Let $T>0$ and let $G\subset [0, T]\times [0, 2\pi]$ be a measurable set.  UCP is the qualitative version of the observability and asks:
\begin{equation*}
    \mbox{(UCP) \;\;\;\;\;\;\;\;\;\;\;\;\;\; Let $u$ be a solution of \eqref{eq: wave-eq-0} and }   \partial_t u =0 \; \mbox{ a.e. in } G \;\; \Longrightarrow \;\; u \equiv \mbox{const}.  \;\;\;\;\;\;\;\;\;\;\;\;\;\;\;\;\;\;\;\;\;\;\;\;\;\;\; 
\end{equation*}

Define the following geometric assumption on the observation set.
\vspace{2mm}

\begin{enumerate}[label={}, leftmargin=6.5em, labelindent=3em]
\item[({\bf Weak  GCC})]\label{def:weakGCC} 
Let $T>0$. A measurable set $G\subset [0, T]\times [0, 2\pi]$ is said to satisfy the 
 weak GCC if for almost every 
$x\in[0, 2\pi]$,  
\begin{equation*}
 \mbox{meas}_\R(G\cap L_{\xi= x})> 0, \;\;  
  \mbox{meas}_\R(G\cap L_{\eta= x})> 0. 
\end{equation*}
\end{enumerate}

\vspace{2mm}

Compared to \blackhyperref{def:GCC}{({\rm GCC})}, it is not required a uniform positive lower bound.

\begin{theorem}\label{thm: main-ucp}
Let $T>0$ and let $G\subset [0,T]\times [0, 2\pi]$ be a measurable set. Unique continuation holds on $G$ if and only if $G$ satisfies
\blackhyperref{def:OSC}{$({\rm OSC})$} and \blackhyperref{def:weakGCC}{{\rm (weak GCC)}}. 
\end{theorem}

\begin{figure}[htp]
    \centering

\tikzset{every picture/.style={line width=0.75pt}} %set default line width to 0.75pt        

\begin{tikzpicture}[x=0.75pt,y=0.75pt,yscale=-1,xscale=1]
%uncomment if require: \path (0,300); %set diagram left start at 0, and has height of 300

%Shape: Rectangle [id:dp9366616708613645] 
\draw   (141,130) -- (292,130) -- (292,170) -- (141,170) -- cycle ;
%Shape: Rectangle [id:dp07766458686162458] 
\draw   (425,130) -- (539,130) -- (539,170) -- (425,170) -- cycle ;
%Straight Lines [id:da41376514256286334] 
\draw    (303,146.5) -- (415,146.5) ;
\draw [shift={(417,146.5)}, rotate = 180] [color={rgb, 255:red, 0; green, 0; blue, 0 }  ][line width=0.75]    (10.93,-3.29) .. controls (6.95,-1.4) and (3.31,-0.3) .. (0,0) .. controls (3.31,0.3) and (6.95,1.4) .. (10.93,3.29)   ;
%Straight Lines [id:da526360629452002] 
\draw    (416,154.5) -- (305,153.52) ;
\draw [shift={(303,153.5)}, rotate = 0.51] [color={rgb, 255:red, 0; green, 0; blue, 0 }  ][line width=0.75]    (10.93,-3.29) .. controls (6.95,-1.4) and (3.31,-0.3) .. (0,0) .. controls (3.31,0.3) and (6.95,1.4) .. (10.93,3.29)   ;

% Text Node
\draw (149,143) node [anchor=north west][inner sep=0.75pt]   [align=left] {\blackhyperref{def:OSC}{${\rm OSC}$} \ + \ \blackhyperref{def:weakGCC}{{\rm weak GCC}}};
% Text Node
\draw (462,142) node [anchor=north west][inner sep=0.75pt]   [align=left] {UCP};
% Text Node
\draw (331,130) node [anchor=north west][inner sep=0.75pt]   [align=left] {{\footnotesize Sec \ref{sec: OSC+GCC>UCP}--\ref{sec: OSC+WGCC>UCP}}};
% Text Node
\draw (320,157) node [anchor=north west][inner sep=0.75pt]   [align=left] {{\footnotesize Sec \ref{sec:neosctoucp}, Sec \ref{sec: ucp-wgcc}}};

\end{tikzpicture}

\end{figure}

\subsection{Observability}\label{Sec:int:obs}
Finally, we obtain the following result.
\begin{theorem}\label{thm: main-ob}
Let $T>0$ and let $G\subset [0, T]\times [0, 2\pi]$ be a measurable set. Observability inequality \eqref{eq: wave-ob}  holds on $G$ if and only if $G$ satisfies  \blackhyperref{def:OSC}{$({\rm OSC})$} and \blackhyperref{def:GCC}{{\rm (GCC)}}. 
\end{theorem}

\begin{figure}[htp]
    \centering

\tikzset{every picture/.style={line width=0.75pt}} %set default line width to 0.75pt        

\begin{tikzpicture}[x=0.75pt,y=0.75pt,yscale=-1,xscale=1]
%uncomment if require: \path (0,300); %set diagram left start at 0, and has height of 300

%Shape: Rectangle [id:dp2828578353361886] 
\draw   (178,130) -- (292,130) -- (292,170) -- (178,170) -- cycle ;
%Shape: Rectangle [id:dp21738407369617485] 
\draw   (425,130) -- (539,130) -- (539,170) -- (425,170) -- cycle ;
%Straight Lines [id:da4858893158429507] 
\draw    (303,146.5) -- (415,146.5) ;
\draw [shift={(417,146.5)}, rotate = 180] [color={rgb, 255:red, 0; green, 0; blue, 0 }  ][line width=0.75]    (10.93,-3.29) .. controls (6.95,-1.4) and (3.31,-0.3) .. (0,0) .. controls (3.31,0.3) and (6.95,1.4) .. (10.93,3.29)   ;
%Straight Lines [id:da5689389791464748] 
\draw    (416,154.5) -- (305,153.52) ;
\draw [shift={(303,153.5)}, rotate = 0.51] [color={rgb, 255:red, 0; green, 0; blue, 0 }  ][line width=0.75]    (10.93,-3.29) .. controls (6.95,-1.4) and (3.31,-0.3) .. (0,0) .. controls (3.31,0.3) and (6.95,1.4) .. (10.93,3.29)   ;

% Text Node
\draw (188,143) node [anchor=north west][inner sep=0.75pt]   [align=left] {\blackhyperref{def:OSC}{${\rm OSC}$} \ + \ \blackhyperref{def:GCC}{{\rm GCC}}};
% Text Node
\draw (439,142) node [anchor=north west][inner sep=0.75pt]   [align=left] {Observability};
% Text Node
\draw (331,130) node [anchor=north west][inner sep=0.75pt]   [align=left] {{\footnotesize Sec \ref{sec: weak-ob-sec}--\ref{sec:OSCplusGCCimplOBS}}};
% Text Node
\draw (320,158) node [anchor=north west][inner sep=0.75pt]   [align=left] {{\footnotesize Sec \ref{sec:neosctoucp}, Sec \ref{sec:obimplyGCC}}};

\end{tikzpicture}
\end{figure}
\vspace{2mm}

\noindent {\it Organization of the paper.}  Sections~\ref{sec:example}--\ref{sec:sharp:obser} correspond, respectively, to Sections~\ref{Sec:int:example}--\ref{Sec:int:obs}.

\section{The counterexample}\label{sec:example}
Choose an initial state $(u_0, u_1)\in \dot H^1(\T)\times L^2(\T)$ such that
\begin{gather*}
     \partial_{\xi}u|_{t= 0, x\in (0, \pi)}= -1,   \; \; \partial_{\xi}u|_{t= 0, x\in (\pi,2\pi)}= 1, \\
       \partial_{\eta}u|_{t= 0, x\in (0, \pi)}= -1,  \;  \;\partial_{\eta}u|_{t= 0, x\in (\pi, 2\pi)}= 1.
\end{gather*}
With this choice, one has
$$
\int_\T \partial_x u(t=0, x) \d x=\int_\T (\partial_\xi u+\partial_\eta u) (t=0, x) \d x=0.
$$
There is a unique solution to the wave equation \eqref{eq: wave-eq-0} on the domain $(t, x)\in (0, 2\pi)\times \T$. 
\vspace{2mm}

Define 
\begin{equation*}
    G:= \{(\xi, \eta); \partial_{\xi}u= \partial_{\eta}u= -1, \textrm{ or }   \partial_{\xi}u= \partial_{\eta}u= 1\}.
\end{equation*}
This region is divided into two parts, $G= G_1\cup G_2$ as shown in Figure \ref{fig:GCCfail1}:
\begin{align*}
    G_1:= \{(\xi, \eta); \partial_{\xi}u= \partial_{\eta}u= -1\}, \;\;\;
     G_2:= \{(\xi, \eta); \partial_{\xi}u= \partial_{\eta}u= 1\}.
\end{align*}

One checks that $G$ satisfies \blackhyperref{def:GCC}{({\rm GCC})} with $c_0= \sqrt{2}\pi$. 
However
$$
\iint_G|\partial_t u|^2\d x \d t = \iint_G|\partial_\xi u-\partial_\eta u|^2 \d \xi \d \eta=0
$$
and the corresponding initial data satisfies
$$
\|\partial_xu_0\|^2_{L^2(\T)}+\|u_1\|^2_{L^2(\T)}= 2\left(\|\partial_{\xi}u|_{t= 0, x\in \T}\|^2_{L^2(\T)}+\|\partial_{\eta}u|_{t= 0, x\in \T}\|^2_{L^2(\T)}\right)=8\pi.
$$
Therefore, the observability inequality \eqref{eq: wave-ob} fails on $G$.

\begin{remark}[A variant]
Let $\tilde G_1, \tilde G_2$ be the blue and red parts in Figure \ref{fig:GCCfail2}, and let $\tilde G=\tilde G_1\cup \tilde G_2$. Then $\tilde G$ satisfies  \blackhyperref{def:GCC}{({\rm GCC})}, but the observability inequality fails on $\tilde G$.     
\end{remark} 

\begin{figure}[htp]
    \centering
    \begin{tikzpicture}[scale=1.0]

  \draw[->] (-0.3,0) -- (6.3,0) node[right] {$x$};
  \draw[->] (0,-0.3) -- (0,6.3) node[above] {$t$};

  \draw (2,0) node[below] {$\frac{2\pi}{3}$} -- (2,0.1);
  \draw (4,0) node[below] {$\frac{4\pi}{3}$} -- (4,0.1);
 \draw (6,0) node[below] {$2\pi$} -- (6,0.1); 
  \draw (0,2) node[left] {$\frac{2\pi}{3}$} -- (0.1,2);
  \draw (0,4) node[left] {$\frac{4\pi}{3}$} -- (0.1,4);
\draw (0,6) node[left] {$2\pi$} -- (0.1,6);  
  
  \draw (0,0) -- (6,0) -- (6,6) -- (0,6) -- cycle;

  \draw[dashed] (-0.5,2.5) -- (2.5,-0.5); % t+x=2π/3
  \draw[dashed] (-0.5,4.5) -- (4.5,-0.5); % x+t=4π/3
  \draw[dashed] (1.5,6.5) -- (6.5,1.5);  
  \draw[dashed] (3.5,6.5) -- (6.5,3.5); 
  
  \draw[dashed] (1.5,-0.5) -- (6.5,4.5); % t-x=2π/3  
  \draw[dashed] (3.5,-0.5) -- (6.5,2.5); % t-x=4π/3 
 \draw[dashed] (-0.5,1.5) -- (4.5,6.5); % t=x-2π/3 
 \draw[dashed] (-0.5,3.5) -- (2.5,6.5); % t=x-4π/3 

\fill[red!30] (0,0) coordinate (A1) -- (2,0) coordinate (B1) -- (0,2) coordinate (C1)-- cycle;

\node at (barycentric cs:A1=1,B1=1,C1=1) {\textcolor{white}{1}};

\fill[blue!30] (2,0) coordinate (A2) -- (4,0) coordinate (B2) -- (3,1) coordinate (C2)-- cycle;

\node at (barycentric cs:A2=1,B2=1,C2=1) {\textcolor{white}{-2}};

\fill[red!30] (4,0) coordinate (A3) -- (6,0) coordinate (B3) -- (6,2) coordinate (C3)-- cycle;

\node at (barycentric cs:A3=1,B3=1,C3=1) {\textcolor{white}{1}};

\fill[blue!30] (0,2) coordinate (A4) -- (1,3) coordinate (B4) -- (0,4) coordinate (C4)-- cycle;

\node at (barycentric cs:A4=1,B4=1,C4=1) {\textcolor{white}{-2}};

\fill[red!30] (1,3) coordinate (A5) -- (3,1) coordinate (B5) -- (5,3) coordinate (C5)-- (3,5) coordinate (D5) --cycle;

\node at (barycentric cs:A5=1,B5=1,C5=1,D5=1) {\textcolor{white}{1}};

\fill[blue!30] (5,3) coordinate (A6) -- (6,2) coordinate (B6) -- (6,4) coordinate (C6) --cycle;

\node at (barycentric cs:A6=1,B6=1,C6=1) {\textcolor{white}{-2}};

\fill[red!30] (0,4) coordinate (A7) -- (2,6) coordinate (B7) -- (0,6) coordinate (C7) --cycle;

\node at (barycentric cs:A7=1,B7=1,C7=1) {\textcolor{white}{1}};

\fill[blue!30] (2,6) coordinate (A8) -- (3,5) coordinate (B8) -- (4,6) coordinate (C8) --cycle;

\node at (barycentric cs:A8=1,B8=1,C8=1) {\textcolor{white}{-2}};

\fill[red!30] (4,6) coordinate (A9) -- (6,4) coordinate (B9) -- (6,6) coordinate (C9) --cycle;

\node at (barycentric cs:A9=1,B9=1,C9=1) {\textcolor{white}{1}};

\node at (10,4) {$\tilde G=\tilde G_1\cup \tilde G_2$};

\node at (10, 3) {$\tilde G_1=\{(x,t): u_\xi=u_\eta=-2\}$: blue part};

\node at (10, 2) {$\tilde G_2=\{(x,t): u_\xi=u_\eta=1\}$: red part};

  \draw[->] (-0.3,0) -- (6.3,0) node[right] {$x$};
  \draw[->] (0,-0.3) -- (0,6.3) node[above] {$t$};
\end{tikzpicture}
     \caption{}
   \label{fig:GCCfail2}
\end{figure}

\section{Observable symmetry condition}\label{sec:symmetry}

In this section, we prove Proposition \ref{prop-sym-conser}, and further show that \blackhyperref{def:OSC}{$({\rm OSC})$} provides a necessary condition for controllability and unique continuation. 

\subsection{Conservation law}  
We first present the following lemma; its proof is left in Appendix \ref{sec:app:integral}.
\begin{lemma}\label{lem-Green-3}
Let $T>0$ and  let  $G\subset [0, T]\times [0, 2\pi]$ be a measurable set. Let $A,B \subset [0,2\pi]$ be two measurable sets. Let $u$ be a solution of \eqref{eq: wave-eq}. Define $U=\partial_x u+ \partial_t u, V=\partial_x u- \partial_t u$.  Then    
\begin{align*}
 \int_A U(x,0)\d x -\int_{A-T}U(x,T)\d x=-\iint_{G\cap L_{\xi\in A}^T} f \d x \d t,  
\end{align*}
\begin{align*}
\int_{B+T}V(x,T)\d x -\int_BV(x,0)\d x=-\iint_{G\cap L_{\eta\in B}^T} f\d x \d t,   
\end{align*}
 where $A-T = \{x-T \textrm{ mod } 2\pi: x \in A\}, B+ T = \{x+ T \textrm{ mod } 2\pi : x \in B\}$.
\end{lemma}

\begin{proof}[Proof of Proposition \ref{prop-sym-conser}]
We proceed the proof in two steps. 
\vspace{1mm}

{\it Step 1.} We show that $I(T)=I(0)$. Since $G$ is ($A, B; T$)-observable symmetric, 
 $G\cap L_{\xi\in A}^T$ equals to $G\cap L_{\eta\in B}^T$ up to a zero measure set. Thus for any $f$ we have
$$
\iint_{G\cap L_{\xi\in A}^T} f \d x \d t = \iint_{G\cap L_{\eta\in B}^T} f \d x \d t.
$$
This, together with Lemma \ref{lem-Green-3}, gives
\[
\int_A U(x,0)\d x -\int_{A-T}U(x,T)\d x=
\int_{B+T}V(x,T)\d x -\int_BV(x,0)\d x,
\]
which implies
$$
I(T)= \int_{A-T}U(x,T)\d x+
\int_{B+T}V(x,T)\d x=\int_{A}U(x,0)\d x+
\int_{B}V(x,0)\d x= I(0).
$$
\vspace{1mm}

{\it Step 2.}  Fix $S\in (0,T)$. 
Observe that $G$ is ($A, B; S$)-observable symmetric.
Indeed, set \(X = G \cap L_{\xi \in A}^T\), \(Y = G \cap L_{\eta \in B}^T\), \(Z = [0, S] \times \R\),  then
\[
[G \cap L_{\xi \in A}^S] \, \Delta \, [G \cap L_{\eta \in B}^S] 
= (X \cap Z) \, \Delta \, (Y \cap Z) 
= (X \, \Delta \, Y) \cap Z.
\]
which has zero measure, since $(X \, \Delta \, Y)$ does.
Performing {\it Step 1} again by replacing $T$ by $S$, we obtain  $I(S)=I(0)$.
\end{proof}

\subsection{Necessary for  controllability}\label{sec-osc-nece}
Recall the definition of the {\it exact controllability} of the equation \eqref{eq: wave-eq}, that is,  there exists a constant $C>0$ such that for any given initial and target states $(u_0, u_1)$ and $(\tilde u_0, \tilde u_1)$ in $ H^1(\T)\times L^2(\T)$, there exists a control function $f\in L^2((0, T)\times \T)$ 
such that the solution of \eqref{eq: wave-eq} with  $(u, \partial_t u)\big|_{t=0}=(u_0,u_1)$
satisfies $(u, \partial_t u)\big|_{t=T}=(\tilde u_0, \tilde u_1)$.

Based on the conservation law, one has the following. 
\begin{proposition}[Controllability implies OSC]\label{prop-control-sym-nece}
Let $T>0$ and let $G\subset [0,T]\times [0, 2\pi]$ be a measurable set. If the system \eqref{eq: wave-eq} is exactly controllable, then $G$ satisfies \blackhyperref{def:OSC}{$({\rm OSC})$}. 
\end{proposition}
\begin{proof}
We argue by contradiction.
Suppose that for some non-trivial pair $(A, B)$, the set $G$ is ($A, B; T$)-observable symmetric. 
According to Proposition \ref{prop-sym-conser},  we know that for any initial state $(u_0,u_1)\in  H^1(\T)\times L^2(\T)$ and for any force $f\in L^2((0, T)\times \T)$ one has a conserved quantity
$I(t)=I(0)$ for every $t\in [0, T]$, 
where 
$$
I(t)= \int_{A-t}(\partial_x u+ \partial_t u)(t,x)\d x+\int_{B+t}(\partial_x u- \partial_t u)(t,x)\d x.
$$
Since  $(A, B)$ is non-tirival, one can find an initial state such that $I(0)$ is not zero. Hence, the system is not null controllable, and therefore, not exactly controllable. 
\end{proof}

\subsection{Necessary for unique continuation}\label{sec:neosctoucp}
We further show the following.
\begin{proposition}[UCP implies OSC]\label{prop:UCtoOSC}
Let $T>0$ and let $G\subset [0,T]\times [0, 2\pi]$ be a measurable set.
Assume that the following holds
$$
\mbox{ Let u be a solution of \eqref{eq: wave-eq-0} and }  \partial_t u =0 \; \mbox{ a.e. in } G \;\; \Longrightarrow \;\; u \equiv \mbox{const}.
$$
Then $G$ satisfies \blackhyperref{def:OSC}{$({\rm OSC})$}.  
\end{proposition}
\begin{proof}
We argue by contradiction. Suppose that for some non-trivial pair $(A, B)$, the set $G$ is ($A, B; T$)-observable symmetric. 
Then, as shown in Figure \ref{fig:ucp-to-OSC}, 
\begin{align}\label{equ-ucp-sym-1}
G \subset \big( \left(L_{\xi\in A}\cap L_{\eta\in B}\right)\cup \left(L_{\xi\in \T\setminus A}\cap L_{\eta\in \T\setminus B}\right)\big) \cap \left([0, T]\times [0, 2\pi]\right).
\end{align} 
\begin{figure}[htp]
    \centering
\tikzset{every picture/.style={line width=0.75pt}} %set default line width to 0.75pt        

\begin{tikzpicture}[x=0.75pt,y=0.75pt,yscale=-1,xscale=1]
%uncomment if require: \path (0,300); %set diagram left start at 0, and has height of 300

%Shape: Rectangle [id:dp6850024644401201] 
\draw  [line width=0.5pt, fill={blue!30}  ,fill opacity=1 ] (162.61,59) -- (424.61,59) -- (424.61,214) -- (162.61,214) -- cycle ;

%Shape: Parallelogram [id:dp5224640076518916] 
\draw  [dashed, line width=0.1pt, fill={rgb, 255:red, 255; green, 255; blue, 255 },fill opacity=1 ] (346.83,59) -- (380.23,59) -- (240.39,214) -- (207,214) -- cycle ;
%Shape: Parallelogram [id:dp9840079751716191] 
\draw  [dashed, line width=0.1pt, fill={rgb, 255:red, 255; green, 255; blue, 255 }  ,fill opacity=1 ] (223.77,59) -- (187.77,59) -- (355,214.1) -- (391,213.82) -- cycle ;
%Shape: Brace [id:dp42256757874131823] 
\draw   (207.6,214.2) .. controls (207.65,218.79) and (209.97,221.05) .. (214.56,220.99) -- (214.56,220.99) .. controls (221.11,220.92) and (224.41,223.17) .. (224.47,227.75) .. controls (224.41,223.17) and (227.66,220.84) .. (234.21,220.76)(231.26,220.79) -- (234.21,220.76) .. controls (238.79,220.7) and (241.05,218.38) .. (241,213.8) ;
%Shape: Brace [id:dp3698750359101015] 
\draw   (354.4,214.2) .. controls (354.5,219.27) and (356.88,221.55) .. (361.55,221.45) -- (362.96,221.41) .. controls (369.62,221.27) and (373,223.53) .. (373.11,228.2) .. controls (373,223.53) and (376.28,221.13) .. (382.95,220.98)(379.95,221.04) -- (384.35,220.95) .. controls (389.02,220.85) and (391.3,218.47) .. (391.2,213.8) ;
%Shape: Polygon [id:ds5599837076983418] 
\draw  [dashed, line width=0.1pt, fill={red!30}  ,fill opacity=1 ] (290.8,120.9) -- (309.1,138) -- (292.8,156.7) -- (274.15,139.25) -- (290.8,120.9) -- cycle ;

% Text Node
\draw (215.49,227.53) node [anchor=north west][inner sep=0.75pt]  [font=\small,rotate=-0.72]  {$B$};
% Text Node
\draw (364.69,228.33) node [anchor=north west][inner sep=0.75pt]  [font=\small,rotate=-0.72]  {$A$};
% Text Node
\draw (319,84.2) node [anchor=north west][inner sep=0.75pt]  [font=\footnotesize]  {$L_{\eta \in B}$};
% Text Node
\draw (224.6,82.6) node [anchor=north west][inner sep=0.75pt]  [font=\footnotesize]  {$L_{\xi \in A}$};
% Text Node
\draw (367.6,125.2) node [anchor=north west][inner sep=0.75pt]  [font=\small]  {$G$};

\draw  [line width=0.5pt ] (162.61,59) -- (424.61,59); 
\draw  [line width=0.5pt] (424.61,214) -- (162.61,214);
\end{tikzpicture}
 
    \caption{$G$ is a subset of the blue and red part.}
    \label{fig:ucp-to-OSC}
\end{figure}
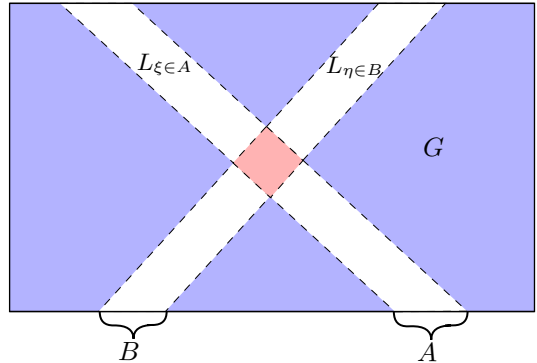

We will construct a non-constant solution such that $\partial_t u= 0$ a.e. in $G$.  Choose the initial state $(u_0, u_1)\in \dot H^1(\T)\times L^2(\T)$ such that
\begin{align*}
\partial_{\xi} u|_{t=0}=\chi_{A}+a\chi_{\T\backslash A}, \quad \partial_{\eta} u|_{t=0}=\chi_B+a\chi_{\T\backslash B},  
\end{align*}
where $a$ is given by
\begin{align*} 
a=-\frac{\mbox{meas}_\R (A)+\mbox{meas}_\R (B)}{4\pi-(\mbox{meas}_\R (A)+\mbox{meas}_\R (B))} 
\end{align*}
This choice verifies that
$$
\int_\T \partial_x u_{0} \d x =\int_\T (\partial_\xi u + \partial_\eta u)|_{t=0}\d x=0.
$$
Solving the wave equation,
\begin{align*}
    \partial_\xi u=1 \mbox{ on } L_{\xi\in A}, &\quad \partial_\xi u=a \mbox{ on } L_{\xi\in \T\backslash A}, \\
    \partial_\eta u=1  \mbox{ on } L_{\eta\in B}, &\quad \partial_\eta u=a  \mbox{ on } L_{\eta\in \T\backslash B}.
\end{align*}

It follows that $u$ is not a constant, and satisfies 
\begin{align*}
 \partial_t u= \partial_\xi u-\partial_\eta u = 0    \mbox{ on } (L_{\xi\in A}\cap L_{\eta\in B})\cup  (L_{\xi\in \T\backslash A}\cap L_{\eta\in \T\backslash B}).
\end{align*}
This non-constant solution $u$ satisfies $\partial_t u= 0$ in $G$. Therefore, UCP does not holds on $G$.
\end{proof}

\section{Unique continuation property}
This section is devoted to the proof of Theorem \ref{thm: main-ucp}. Section \ref{sec: ucp-wgcc} shows that \blackhyperref{def:weakGCC}{({\rm weak GCC})} is necessary for unique continuation. The sufficiency is proved in two steps. First, we obtain unique continuation under \blackhyperref{def:OSC}{$({\rm OSC})$} and \blackhyperref{def:GCC}{({\rm GCC})} in Section \ref{sec: OSC+GCC>UCP}. Then, in Section \ref{sec: OSC+WGCC>UCP} we further prove unique continuation under \blackhyperref{def:OSC}{$({\rm OSC})$} and \blackhyperref{def:weakGCC}{({\rm weak GCC})}. Although the latter already suffices to build the sufficient part, the former offers a simpler setting to present the ideas.

\subsection{Unique continuation implies weak GCC}\label{sec: ucp-wgcc}
Throughout this section, since the value of $T>0$ is fixed, we simply denote
\begin{align*}
    L_{\xi= \xi_0}\cap \left([0, T]\times \R\right), \;  L_{\xi\in A}\cap \left([0, T]\times \R\right), \;  L_{\eta= \eta_0}\cap \left([0, T]\times \R\right),  \;  L_{\eta\in B}\cap \left([0, T]\times \R\right),
\end{align*}
by  $L_{\xi= \xi_0}, L_{\xi\in A}, L_{\eta= \eta_0}$, and $L_{\eta\in B}$.
\vspace{2mm}

If the initial state $(u_0, u_1)$ is smooth enough, then the unique solution to \eqref{eq: wave-eq-0} satisfies that $\partial_{\xi}u$ is invariant along the $\xi$-characteristics. In the setting with $(u_0, u_1)\in  H^1(\T)\times L^2(\T)$, one has the following result, while its proof is left in Section \ref{proof-lem-char}.
\begin{lemma}\label{lem-char}
Let $u$ be a solution to \eqref{eq: wave-eq-0}. Then for almost every $\xi_0\in [0, 2\pi]$, $\partial_\xi u$ is a constant almost everywhere on the $\xi$-characteristic $L_{\xi= \xi_0}$ (resp. for $\partial_{\eta} u$ on $\eta$-characteristic $L_{\eta= \eta_0}$).  
\end{lemma}

\begin{proposition}[Unique continuation implies weak GCC]\label{prop:ucp-weakGCC}
Let $T>0$ and $G\subset [0, T]\times [0, 2\pi]$ be a measurable set.  Assume that the following unique continuation property holds
$$
\mbox{ Let u be a solution of \eqref{eq: wave-eq-0} and }   \partial_t u =0 \mbox{ a.e. in } G \;\; \Longrightarrow \;\; u \equiv \mbox{const.} 
$$
Then the set $G$ satisfies the \blackhyperref{def:weakGCC}{{\rm (weak GCC)}}.  
\end{proposition}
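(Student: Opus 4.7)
The plan is to argue by contraposition: assuming the \blackhyperref{def:weakGCC}{{\rm (weak GCC)}} fails, I construct a nonzero solution $u$ of \eqref{eq: wave-eq-0} with $u_t = 0$ almost everywhere on $G$, contradicting UCP. Failure of the weak GCC means that one of the two sets
\begin{equation*}
\{x \in \T : \mbox{meas}_\R(G \cap L_{\eta = x}) = 0\}, \qquad \{x \in \T : \mbox{meas}_\R(G \cap L_{\xi = x}) = 0\}
\end{equation*}
has positive measure. By the $\xi \leftrightarrow \eta$ symmetry of the d'Alembert decomposition, it suffices to treat the first case: there exists a measurable $E \subset \T$ with $|E|>0$ such that $\mbox{meas}_\R(G \cap L_{\eta = x}) = 0$ for a.e.\ $x \in E$. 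The counterexample will be a right-moving traveling wave $u(t,x) = F(x-t)$ with $F'$ concentrated on $E$; the other case is handled identically with $x-t$ replaced by $x+t$.

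To construct $F$, pick two disjoint subsets $E_1, E_2 \subset E$ of equal positive measure (available by continuity of $t \mapsto |E \cap [0,t]|$ and the intermediate value theorem). Define $F' : \R \to \R$ as the $2\pi$-periodic extension of $\chi_{E_1} - \chi_{E_2}$, and let $F$ be any antiderivative. The zero-mean condition $\int_0^{2\pi} F'\,\d x = 0$ makes $F$ itself $2\pi$-periodic, so $u(t,x) := F(x-t)$ is a well-defined solution of \eqref{eq: wave-eq-0} on $[0,T]\times\T$ with initial data $u_0 = F$ and $u_1 = -F'$ in $\dot H^1(\T) \times L^2(\T)$, since $F' \in L^\infty(\T) \subset L^2(\T)$. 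Because $F' \not\equiv 0$, one has $u \not\equiv 0$ in $\dot H^1$.

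It remains to verify that $u_t = -F'(x-t)$ vanishes a.e.\ on $G$. By construction, $F'(x-t) \neq 0$ forces $x - t \bmod 2\pi \in E$, i.e.\ $(t,x) \in L_{\eta \in E}$ in the notation of \eqref{eq: cylinder}. Passing to the null coordinates $(\xi, \eta)$ (a linear bijection with constant Jacobian) and applying Fubini yields
\begin{equation*}
\mbox{meas}_{\R^2}\bigl(G \cap L_{\eta \in E}\bigr) \;=\; c \int_E \mbox{meas}_\R\bigl(G \cap L_{\eta = x}\bigr) \, \d x \;=\; 0,
\end{equation*}
by the defining property of $E$. Hence $\{(t,x) \in G : u_t(t,x) \neq 0\} \subset G \cap L_{\eta \in E}$ has measure zero, so $u_t = 0$ a.e.\ on $G$ while $u \not\equiv 0$, contradicting UCP.

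The only moderately delicate step is simultaneously enforcing the three constraints on $F$, namely $L^2$ regularity, $2\pi$-periodicity, and $\{F' \neq 0\} \subset E$. The splitting $E = E_1 \sqcup E_2$ with $|E_1|=|E_2|$ is precisely the device that produces the required zero-mean $F'$ while keeping its support inside $E$; everything else reduces to Fubini and routine verification of the $\dot H^1 \times L^2$ regularity.
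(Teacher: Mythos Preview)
Your proof is correct and follows essentially the same approach as the paper: both argue by contraposition, take a positive-measure set $E$ of ``missed'' $\eta$-characteristics, and build a purely right-moving wave $u(t,x)=F(x-t)$ with $F'$ supported in $E$ and mean zero. The only cosmetic difference is that the paper phrases the construction via $\partial_\xi u|_{t=0}=0$, $\partial_\eta u|_{t=0}=\chi_A\phi$ with an abstract zero-mean $\phi$, whereas you make the explicit choice $\phi=\chi_{E_1}-\chi_{E_2}$.
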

\begin{proof}
We argue by contradiction.  Suppose that $G$ does not satisfy the \blackhyperref{def:weakGCC}{({\rm weak GCC})},  then there exists a subset $A\subset [0, 2\pi]$ with $\mbox{meas}_\R(A)>0$ such that
\begin{align*}
\mbox{meas}_\R(G\cap L_{\xi\in A})=0 \quad \mbox{ or } \quad 
  \mbox{meas}_\R(G\cap L_{\eta\in A})=0.
\end{align*}
Without loss of generality, we assume that 
\begin{align}\label{equ-ucp-wgcc-1}
\mbox{meas}_\R(G\cap L_{\xi\in A})=0.   
\end{align}
Since $A$ has positive measure, there is a non-zero function $\phi$ such that
\begin{align}\label{equ-ucp-wgcc-2}
  \int_A \phi(x)\d x=0, \quad \int_A|\phi| \d x=\mbox{meas}_{\R}(A)>0.  
\end{align}
Choose an initial state $(u_0, u_1)\in \dot H^1(\T)\times L^2(\T)$ such that 
$ \partial_x u_{0}= u_1= \chi_A \,\phi.
$
This is possible since $\int_\T \partial_x u_{0}\d x =\int_A\phi \d x =0$. Then
$$
2\partial_\xi u|_{t=0}= \partial_x u_{0}+u_1=2\chi_A \, \phi, \quad 2\partial_\eta u|_{t=0}=\partial_x u_{0}- u_1= 0.
$$
Solving the wave equation \eqref{eq: wave-eq-0} with this initial state  $(u_0, u_1)$, it follows that
$$
\partial_\xi u=0 \quad \mbox{ in } L_{\xi\in \T\backslash A},
$$
and
$$
\partial_\eta u= 0 \quad \mbox{ in } [0,T]\times \T.
$$
Thus
$$
\partial_t u=\partial_\xi u-\partial_\eta u= 0 \quad \mbox{ in } L_{\xi\in \T\backslash A}.
$$
Thanks to \eqref{equ-ucp-wgcc-1}, we see $G\subset L_{\xi\in \T\backslash A}$, and thus $\partial_t u=0$ in $G$. By the UCP assumption, we know that $u$ is a constant. Therefore, $0 = \partial_\xi u|_{t=0}= \chi_A\,\phi$, which leads to a contradiction with \eqref{equ-ucp-wgcc-2}.
\end{proof}

\subsection{Symmetric function pairs}

Introduce the following notions, which will be used in the proof of
the unique continuation property.
\begin{definition}\label{def:decom:pair}
A family $\{(A_k, B_k)\}_{k= 1}^K$ is called a {\it decomposition pair} if the following
conditions are satisfied:
\begin{gather*}
A_k, B_k\subset [0, 2\pi] \textrm{ are measurable sets of positive Lebesgue measure for each $k=1,..., K$, }\\ 
\{A_k\}_{k= 1}^K \textrm{ are pairwise disjoint, } \quad 
\{B_k\}_{k= 1}^K \textrm{ are pairwise disjoint, }\\
\mbox{meas}_{\R}\big([0,2\pi]\setminus\cup_{1\leq k\leq K} A_k\big)= \mbox{meas}_{\R}\big([0,2\pi]\setminus\cup_{1\leq k\leq K} B_k\big)= 0.
\end{gather*}
\end{definition}

\begin{definition}\label{def:classSc}
A pair of functions
$(f,g)$ is called a {\it symmetric  function pair} if there exist a decomposition pair $\{(A_k, B_k)\}_{k= 1}^K$ and  a family of pairwise distinct real numbers $\{s_k\}_{k= 1}^K$ such that 
\begin{equation*}
 f=\sum_{k=1}^K s_k\, \chi_{A_k}, \qquad g=\sum_{k= 1}^K s_k\, \chi_{B_k} \quad \mbox{ for a.e. } x\in \T,
\end{equation*}
and
$
\int_\T (f+g)\d x= 0.
$
The set of all symmetric function pairs is denoted by $\mathcal{S}^2_c$.
\end{definition}
Note that $f$ and $g$ take the same values $s_k$ on the respective sets $A_k$ and $B_k$, each of positive measure. 
We also introduce weaker notions.
% which will be used to investigate the corresponding relationship between the \blackhyperref{def:weakGCC}{({\rm weak GCC})} and UCP.

\begin{definition}\label{def:weakdecpair} Let $I$ be a countable index set (possibly finite). A family $\{(A_k, B_k)\}_{k\in I}$ is called a {\it weak decomposition pair} if the following
conditions are satisfied:
\begin{gather*}
A_k, B_k\subset [0, 2\pi] \textrm{ measurable sets of positive Lebesgue measure for each $k\in I$, }\\ 
\{A_k\}_{k\in I} \textrm{ are pairwise disjoint, } \quad 
\{B_k\}_{k\in I} \textrm{ are pairwise disjoint, }\\
\mbox{meas}_{\R}\big([0,2\pi]\setminus\cup_{k\in I} A_k\big)= \mbox{meas}_{\R}\big([0,2\pi]\setminus\cup_{k\in I} B_k\big)= 0.
\end{gather*}
\end{definition}

\begin{definition}\label{def:classS}
A pair of functions
$(f,g)$ is called a {\it weak symmetric  function pair} if there exist a weak decomposition pair $\{(A_k, B_k)\}_{k\in I}$ and  a family of pairwise distinct real numbers $\{s_k\}_{k\in I}$ such that 
\begin{equation*}
 f=\sum_{k\in I} s_k\, \chi_{A_k}, \qquad g=\sum_{k\in I} s_k\, \chi_{B_k} \quad \mbox{ in } L^2(\T),
\end{equation*}
and
$
\int_\T (f+g)\d x= 0.
$
The set of all weak symmetric function pairs is denoted by $\mathcal{S}^2$.
\end{definition}

\subsection{OSC and GCC imply unique continuation}\label{sec: OSC+GCC>UCP} 
We first prove the following result. 
\begin{proposition}[GCC implies UCP up to $\mathcal{S}^2_{c}$]\label{prop-ucp-S2}
  Let $T>0$ and let $G\subset [0, T]\times [0, 2\pi]$ satisfy \blackhyperref{def:GCC}{$({\rm GCC})$}.
    If $u$ solves the wave equation \eqref{eq: wave-eq-0} and $u_t = 0$ on $G$, then $(\partial_\xi u, \partial_\eta u)|_{t=0}$ belongs to the  class $\mathcal{S}^2_c$ of symmetric function pairs (see Definition \ref{def:classSc}).
\end{proposition}

We start by presenting the following lemma. 
\begin{lemma}\label{lem-gcc-symclass}
Let $T>0$ and let $G\subset [0, T]\times [0, 2\pi]$ satisfy \blackhyperref{def:GCC}{$({\rm GCC})$}. Assume that $f,g\in L^1(\T)$ such that $\int_{\T}(f+g) \d x=0$ and
\begin{align}\label{equ-260106-1}
f(x+t)= g(x-t) \quad \mbox{ for almost every  } (t,x)\in G.
\end{align}
Then $(f,g)$ belongs to the  class $\mathcal{S}^2_c$ of symmetric function pairs.
\end{lemma}
\begin{proof}
In this proof, we identify $(t,x+2k\pi)$ with $(t,x)$ for all $k\in\Z$.
By the assumption \eqref{equ-260106-1}, there exists a set $\mathcal{N} \subset [0,T] \times [0, 2\pi)$ such that $\mbox{meas}_{\R^2}(\mathcal{N})=0$ and
\begin{align}\label{equ-260106-1.5}
f(x+t)=g(x-t) \quad \mbox{ for all } (t,x)\in G\setminus \mathcal{N}.
\end{align}

Because $\mathcal{N}$ has zero measure in $\R^2$, by Fubini's theorem, one has $\int_0^T 1_{\mathcal{N}}(s, x+s) \d s = 0$ for almost every $x\in [0, 2\pi]$.
By the \blackhyperref{def:GCC}{$({\rm GCC})$} assumption, for almost every $x \in [0, 2\pi]$, the sets
\[
I_x = \{ s \in [0,T] : (s, x + s) \in G \setminus \mathcal{N} \}, \quad J_x = \{ s \in [0,T] : (s, x - s) \in G \setminus \mathcal{N} \}
\]
satisfy $\mbox{meas}_{\R}(I_x) \geq c_0/\sqrt{2}$ and $\mbox{meas}_{\R}(J_x) \geq c_0/\sqrt{2}$. Denote this set for $x$ as $\mathcal{M}$.
\vspace{2mm}

For any $x\in \mathcal{M}$ and for any $s \in I_x$, we have $(s, x + s) \in G \setminus \mathcal{N}$, thus
\[
g(x) = f(x + 2s \bmod 2\pi),
\]
because $g(x)=g((x+s)-s)=f((x+s)+s)$ by \eqref{equ-260106-1.5}. Similarly, for  every $s \in J_x$, we have $f(x) = g(x - 2s \bmod 2\pi)$. Define  $E_x, F_x\subset [0, 2\pi]$ as
\begin{align*}
    E_x &= \{ x + 2s \bmod 2\pi : s \in I_x \} \subseteq f^{-1}(g(x)), \\  F_x &= \{ x - 2s \bmod 2\pi : s \in J_x \} \subseteq g^{-1}(f(x)).
\end{align*}
Here, and throughout this proof, the preimage $f^{-1}(z)$ is understood as a subset of $[0,2\pi)$.
For any $y \in [0, 2\pi)$, the equation $x+ 2s \equiv y \pmod{2\pi}$ has solutions $s = (y-x)/2 + k\pi$ for integers $k$. Therefore, for every element $y$ from $E_x$,  
the number of  $s\in I_x\subset [0,T]$ such that $x+ 2s \equiv y \pmod{2\pi}$ is at most $M := \lfloor T/\pi \rfloor + 2$. It follows that
\[
\mbox{meas}_{\R}(I_x)  \leq  M \, \mbox{meas}_{\R}(E_x).
\]
Thus, $\mbox{meas}_{\R}(E_x) \geq \frac{1}{M} \mbox{meas}_{\R}(I_x) \geq \frac{c_0}{\sqrt{2} M}$. Similarly, $\mbox{meas}_{\R}(F_x) \geq \frac{c_0}{\sqrt{2} M}$. Hence,  
\begin{equation}\label{equ-260112}
\begin{split}
    \mbox{meas}_{\R}(f^{-1}(g(x)))\geq \mbox{meas}_{\R}(E_x) \geq \frac{c_0}{\sqrt{2} M},  \\ \mbox{meas}_{\R}(g^{-1}(f(x)))\geq \mbox{meas}_{\R}(F_x) \geq \frac{c_0}{\sqrt{2}M},
    \end{split}
\end{equation} 
for  every $x\in \mathcal{M}$.
\vspace{2mm}

Define the sets
\begin{align*}
    S &= \left\{ a \in \R : \mbox{meas}_{\R}(f^{-1}(a)) \geq \frac{c_0}{\sqrt{2}M} \right\},  
    \\
     S' &= \left\{ b \in \R : \mbox{meas}_{\R}(g^{-1}(b)) \geq \frac{c_0}{\sqrt{2}M} \right\}.
\end{align*}
As illustrated in \eqref{equ-260112}, for almost every $x\in [0, 2\pi]$, we have $g(x) \in S$ and $f(x) \in S'$. 
\vspace{1mm}

We further claim that
$$
S=S'.
$$
Indeed, if $a\in S$, then $\mbox{meas}_{\R}(f^{-1}(a)) \geq \frac{c_0}{\sqrt{2} M}$. 
Since the set $Y = \{ y \in [0, 2\pi] : \mbox{meas}_\R(g^{-1}(f(y)))\geq \frac{c_0}{\sqrt{2}M}\}$ has full measure (i.e. $\mbox{meas}_\R (Y)=2\pi$), the intersection $f^{-1}(a)\cap Y$ has the same measure as $f^{-1}(a)$, hence larger than $\frac{c_0}{\sqrt{2} M}$. For each $y\in f^{-1}(a)\cap Y$, we have $\mbox{meas}_\R(g^{-1}(f(y)))\geq \frac{c_0}{\sqrt{2}M}$. But $f(y)=a$, so $\mbox{meas}_\R(g^{-1}(a))\geq \frac{c_0}{\sqrt{2}M}$. Therefore, $a\in S'$. This proves $S\subset S'$. Similarly, we also have $S'\subset S$. Thus the claim follows. 
\vspace{1mm}

Next, we show that $S$ is a finite set.  For any distinct $a, a' \in S$, the sets $f^{-1}(a)$ and $f^{-1}(a')$ are disjoint. Each such set has measure at least $\frac{c_0}{\sqrt{2} M}$, and since $\mbox{meas}_{\R}([0, 2\pi]) = 2\pi$, we have
\[
\# S \cdot \frac{c_0}{\sqrt{2}M} \leq \sum_{a \in S} \mbox{meas}_\R(f^{-1}(a)) \leq 2\pi.
\]

Without loss of generality, we write $S = \{s_1, \dots, s_K\}$ with the real numbers $s_k$ pairwise distinct.
Now set $A_k = f^{-1}(s_k)$ and $B_k = g^{-1}(s_k)$. Clearly, 
$$\mbox{meas}_{\R}(A_k),  \quad \mbox{meas}_{\R}(B_k)\geq \frac{c_0}{\sqrt{2}M}.$$   
Recall that for almost every $x\in [0, 2\pi]$, we have $f(x), g(x) \in S$.   Thus $(A_k, B_k)_{1\leq k\leq K}$ is a decomposition pair (see Definition \ref{def:decom:pair}), and
\[
f = \sum_{k=1}^K s_k\, \chi_{A_k}, \quad g = \sum_{k=1}^K s_k \,\chi_{B_k}\quad \mbox{  a.e.  in } [0, 2\pi]. 
\]
Then $(f,g)$ is a symmetric function pair (see Definition \ref{def:classSc}), i.e., $(f,g) \in \mathcal{S}^2_c$.
\end{proof}

Now, we come back to the proof of Proposition \ref{prop-ucp-S2}.
\begin{proof}[Proof of Proposition \ref{prop-ucp-S2}]
Because $u$ is a solution to the wave equation \eqref{eq: wave-eq-0}, the functions $\partial_\xi u, \partial_\eta u$ solve the transport equations
$$
(\partial_t-\partial_x)\partial_\xi u=0, \quad (\partial_t+\partial_x)\partial_\eta u=0
$$
with initial conditions
$$
(\partial_\xi u, \partial_\eta u)|_{t=0}=\left(\frac{u_{0x}+u_1}{2}, \frac{u_{0x}-u_1}{2}\right):=(f, g)\in L^2(\T)\times L^2(\T),
$$
satisfying $\int_{\T}(f+g) \d x=0$.

Thanks to Lemma \ref{lem-char} (or directly from Lemma \ref{lem-tran-so}), we have
\begin{align}\label{equ-0107-1}
\partial_\xi u(t,x)=f(x+t), \quad \partial_\eta u(t,x)=g(x-t), \quad \mbox{ a.e. } (t,x)\in [0,T]\times \T.   
\end{align}

Since $\partial_tu=\partial_{\xi}u-\partial_{\eta}u$, the assumption $\partial_t u=0$ almost everywhere in $G$ implies that 
\begin{equation}\label{eq: vanish-G}
\partial_{\xi}u=\partial_{\eta}u, \;   \text{ a.e. in }G.
\end{equation}
It follows from \eqref{equ-0107-1} and \eqref{eq: vanish-G} that
$$
f(x+t)=g(x-t) \quad \mbox{ for \;  a.e. } (t,x)\in G\subset [0, T]\times [0, 2\pi].
$$
By Lemma \ref{lem-gcc-symclass}, we find that $(f,g)$, and thus $(\partial_\xi u, \partial_\eta u)|_{t=0}$, belongs to $\mathcal{S}^2_c$. 
\end{proof}

Finally, as a consequence of \blackhyperref{def:OSC}{$({\rm OSC})$} and Proposition \ref{prop-ucp-S2}, we obtain:
\begin{proposition}[Unique continuation]\label{cor-GCC-sym}
Let $T>0$. Let $G\subset [0, T]\times [0, 2\pi]$ satisfies \blackhyperref{def:GCC}{$({\rm GCC})$} and  \blackhyperref{def:OSC}{$({\rm OSC})$}.
    If $u$ solves the wave equation \eqref{eq: wave-eq-0} and $\partial_t u = 0$ in $G$,  then $u$ is a constant.
\end{proposition}
\begin{proof}
 Thanks to Proposition \ref{prop-ucp-S2}, we know that $(\partial_{\xi}u|_{t= 0}, \partial_{\eta}u|_{t=0})\in \mathcal{S}^2_c$. Assume 
\begin{equation}\label{equ-916-3}
    \partial_{\xi}u|_{t= 0}= \sum_{k= 1}^K s_k \,\chi_{A_k} , \quad \mbox{ a.e. }  x\in \T
\end{equation}
and
\begin{equation}\label{equ-916-4}
    \partial_{\eta}u|_{t=0}= \sum_{k= 1}^K s_k \, \chi_{B_k}, \quad \mbox{ a.e. }  x\in \T
\end{equation}
where $\{(A_k, B_k\}_{k= 1}^K$ is a decomposition pair, $\{s_k\}_{k= 1}^K$ are pairwise distinct real numbers.
We split the discussion into two cases.
\vspace{1mm}

{\it Case (1).} Assume $K=1$. In this case, by \eqref{equ-916-3} and \eqref{equ-916-4} we have $\partial_{\xi}u|_{t=0}=\partial_{\eta}u|_{t=0}=s_1$ for almost every $x\in \T$. 
Since $\int_{\T} (\partial_{\xi}u|_{t=0} + \partial_{\eta}u|_{t=0}) \d x= 0$, thus $s_1= 0$. 
Hence
\begin{align*}
\partial_tu(0,x)=0,\;\partial_xu(0,x)=0 \quad \mbox{ a.e. } x\in \T.
\end{align*}
This implies that the solution $u$ is a constant.
\vspace{2mm}

{\it Case (2).} Assume $K\geq 2$.
 In the sequel, we shall show that this case will never happen, due to \blackhyperref{def:OSC}{$({\rm OSC})$}. In fact, thanks to \eqref{eq: vanish-G}, we know that up to a set with zero measure,
\begin{align}\label{equ-0107-2}
G\subset \{(t,x)\in [0,T]\times [0, 2\pi]:\partial_{\xi}u=\partial_{\eta}u \}.   
\end{align}  

Thanks to \eqref{equ-916-3}-\eqref{equ-916-4}, using Lemma \ref{lem-char}, we have for all $k=1,2,\cdots,K$,
$$
\partial_\xi u= s_k  \quad \mbox{ a.e. in } L_{\xi\in A_k}, 
$$
and
$$
\partial_\eta u= s_k  \quad \mbox{ a.e. in } L_{\eta\in B_k}.
$$
Thus, up to a set with zero measure,
$$
\{(t,x)\in [0,T]\times [0, 2\pi]:\partial_{\xi}u=\partial_{\eta}u  \} \subset \bigcup_{k=1}^K \Big(L_{\xi\in A_k}\cap L_{\eta\in B_k}\cap \left([0,T]\times [0, 2\pi]\right)\Big).
$$
Therefore,  up to a set with zero measure,
\begin{align*}
    G\subset \bigcup_{k=1}^K \Big(L_{\xi\in A_k}\cap L_{\eta\in B_k}\cap \left([0,T]\times [0, 2\pi]\right)\Big).
    \end{align*} 

For each $k\in \{1,\ldots,K\}$,  define $G_k:=G\cap \left(L_{\xi\in A_k}\cap L_{\eta\in B_k}\right)$.
Thus 
\begin{align}\label{equ-916-5}
G = \bigcup_{k=1}^K G_k \quad  \textrm{ and } \quad \{G_k\}_{k= 1}^K \textrm{ are pairwise disjoint sets. }
\end{align}  
By the definition of $G_k$, for each $k$, up to a set of zero measure, 
\begin{equation*}
     G\cap L_{\xi\in A_k}= G_k= G\cap  L_{\eta\in B_k}.
\end{equation*}
This implies that
$$
\mbox{meas}_{\R^2} \Big( [G\cap L_{\xi\in A_{k}}] \, \Delta \, [G\cap L_{\eta\in B_{k}}]\Big)=0.
$$

Since $K\geq 2$, for each $k\in \{1,\ldots,K\}$,   the pair $(A_{k}, B_{k})$ is non-tirival.
Thus $G$ is $(A_{k}, B_{k}; T)$-observable symmetric. 
This is in contradiction to the  \blackhyperref{def:OSC}{$({\rm OSC})$} assumption.  

This shows that case (2) cannot occur.
Thus, the proof is complete.
\end{proof} 
\vspace{1mm}

\subsection{OSC and weak GCC  imply unique continuation}\label{sec: OSC+WGCC>UCP}
In this section, we generalize the results in the previous section.
\begin{proposition}[Weak GCC implies UCP up to $\mathcal{S}^2$]\label{prop-weakGCC-ucp-S2}
    Let $T>0$ and let $G\subset [0, T]\times [0, 2\pi]$ satisfy \blackhyperref{def:weakGCC}{{\rm (weak GCC)}}.
    If $u$ solves the wave equation \eqref{eq: wave-eq-0} and $u_t = 0$ on $G$, then  $(\partial_\xi u, \partial_\eta u)|_{t=0}$ belongs to  the class $\mathcal{S}^2$ of weak symmetric function pairs (see Definition \ref{def:classS}).
\end{proposition}

Similar to Proposition \ref{prop-ucp-S2}, Proposition \ref{prop-weakGCC-ucp-S2} follows from an auxiliary lemma.
\begin{lemma}\label{lem-gcc-symclass-weak}
Let $T>0$ and let $G\subset [0, T]\times [0, 2\pi]$ satisfy the \blackhyperref{def:weakGCC}{{\rm (weak GCC)}}. Assume that $f,g\in L^2(\T)$ such that $\int_\T (f+g) \d x= 0$ and
\begin{align}\label{equ-260106-2}
f(x+t)=g(x-t) \quad \mbox{ for  almost every } (t,x)\in G.
\end{align}
Then $(f,g)$ belongs to the class $\mathcal{S}^2$ of weak symmetric function pairs.
\end{lemma}

\begin{proof}
The idea is similar to the proof of Lemma \ref{lem-gcc-symclass}, and we use the same notations. Since $G$ satisfies \blackhyperref{def:weakGCC}{{\rm (weak GCC)}},  for almost every $x \in \T$, the sets
\[
I_x = \{ s \in [0,T] : (s, x + s) \in G \setminus \mathcal{N} \}, \quad J_x = \{ s \in [0,T] : (s, x - s) \in G \setminus \mathcal{N} \}
\]
are measurable and satisfy $\mbox{meas}_{\R}(I_x) > 0$ and $\mbox{meas}_{\R}(J_x) > 0$.   
For such $x\in \mathcal{M}$, define
\[
E_x = \{ x + 2s \bmod 2\pi : s \in I_x \} \subseteq f^{-1}(g(x)), \quad F_x = \{ x - 2s \bmod 2\pi : s \in J_x \} \subseteq g^{-1}(f(x)).
\]
Then 
\begin{equation*}
\mbox{meas}_{\R}(E_x) \geq  M \mbox{meas}_{\R}(I_x) > 0, \quad   \mbox{meas}_{\R}(F_x) \geq M \mbox{meas}_{\R}(J_x) > 0.  
\end{equation*}
Consequently,
\[
\mbox{meas}_{\R}(f^{-1}(g(x))) > 0, \quad \mbox{meas}_{\R}(g^{-1}(f(x))) > 0.
\]

Define the sets
\[
S = \{ a \in \R : \mbox{meas}_{\R}(f^{-1}(a)) > 0 \}, \quad S' = \{ b \in \R : \mbox{meas}_{\R}(g^{-1}(b)) > 0 \}.
\]
From the above, for almost every $x$, we have $g(x) \in S$ and $f(x) \in S'$. If $a \in S$, then $\mbox{meas}_{\R}(f^{-1}(a)) > 0$. And $f^{-1}(a)\cap\mathcal{M}$ has positive measure. For  every $y \in f^{-1}(a)\cap\mathcal{M}$, we have $\mbox{meas}_{\R}(g^{-1}(f(y)) > 0$. But $f(y)= a$, so  $\mbox{meas}_{\R}(g^{-1}(a)) > 0$, i.e., $a \in S'$. Symmetrically, $S' \subset S$. Hence, $S = S'$.
\vspace{1mm}

For distinct $a, a' \in S$, the sets $f^{-1}(a)$ and $f^{-1}(a')$ are disjoint. Since each has positive measure and $\mbox{meas}_{\R}([0, 2\pi]) < \infty$, there can be at most countably many such $a$. Therefore, $S$ is at most countable. So we assume that
$$
S=\{s_1,s_2,\cdots,s_k,\cdots\}
$$
where $s_k$ are pairwise distinct real numbers. For each $k$, define $A_k=f^{-1}(s_k)$ and $B_k=g^{-1}(s_k)$. Then $(A_k, B_k)_{1\leq k\leq K}$ is a weak decomposition pair (see Definition \ref{def:weakdecpair}). Since $f\in L^2(\T)$, we have
$$
\|f\|^2_{L^2(\T)}=\sum_{k\in S}|s_k|^2\mbox{meas}_{\R}(A_k)<\infty.
$$
This implies that
$$
f=\sum_{k\in S} s_k \, \chi_{A_k} \quad \mbox{ in } L^2(\T).
$$
Similarly, 
$$
g=\sum_{k\in S} s_k \, \chi_{B_k} \quad \mbox{ in } L^2(\T).
$$
Thus $(f,g)$ is a weak symmetric function pair.
\end{proof}

\begin{proposition}[OSC and weak GCC  imply UCP]\label{coro:osc+weakGCCtoUCP}
Let $T>0$ and $G\subset [0,T]\times [0, 2\pi]$ be a measurable set. Let $G$ satisfies  \blackhyperref{def:weakGCC}{{\rm (weak GCC)}} and  \blackhyperref{def:OSC}{$({\rm OSC})$}.
    If $u$ solves the wave equation \eqref{eq: wave-eq-0} and $u_t = 0$ on $G$,  then $u$ is a constant.
\end{proposition}
\begin{proof}
With Proposition \ref{prop-weakGCC-ucp-S2}, the result follows by an argument analogous to the proof of Proposition \ref{cor-GCC-sym}. Indeed, the \blackhyperref{def:weakGCC}{{\rm (GCC)}} condition guarantees that the sets $A_k$ and $B_k$ in \eqref{equ-916-3} and \eqref{equ-916-4} satisfy a uniform lower bound.
However, this uniform bound is not actually used in the proof of Proposition \ref{cor-GCC-sym}, particularly in case (2). We only require that $A_k$ and $B_k$ have positive measure, which is already ensured by the \blackhyperref{def:weakGCC}{{\rm (weak GCC)}} condition via Proposition \ref{prop-weakGCC-ucp-S2}.  To obtain a contradiction, we show that $G$ is $(A_k, B_k; T)$-symmetric observable.
\end{proof}

\subsection{Proof of Theorem \ref{thm: main-ucp}} Collect Propositions \ref{prop:UCtoOSC}, \ref{prop:ucp-weakGCC}, and \ref{coro:osc+weakGCCtoUCP}.

\section{Observability}\label{sec:sharp:obser}
This section is devoted to the proof of Theorem \ref{thm: main-ob} concerning observability.
\vspace{1mm}
\subsection{Transport equations}
As a preparation, we first investigate the observability for the transport equations:
\begin{align}
(\partial_t-\partial_x)w=0,\quad w|_{t=0}= w_0\in L^2(\T),\label{eq: transport-eq}\\
(\partial_t+\partial_x) w=0,\quad w|_{t=0}= w_0\in L^2(\T).\label{eq: transport-eq-2}
\end{align}
Let $T>0$ and $G\subset [0,T]\times \T$ be a measurable set. The equation \eqref{eq: transport-eq} is said to be {\it observable on $G$} if there exists a constant $C>0$ such that 
\begin{align}\label{equ-tran-suff-necc-1}
 \|w_0\|^2_{L^2(\T)}\leq C\iint_G|w(t,x)|^2\d x \d t 
\end{align}
for all solutions to the transport equation \eqref{eq: transport-eq}. 

We obtain the following result, and put the proof in Appendix \ref{proof-lem-char}.
\begin{proposition}\label{thm-tran-suff-nece}
Let $T>0$ and $G\subset [0,T]\times [0, 2\pi]$ be a measurable set.
The tranport equation  \eqref{eq: transport-eq} is observable on $G$ if and only if there exists a constant $c_0>0$ such that 
\begin{equation*}
      \textrm{ for } a.e. \; \; x\in [0, 2\pi],  \; \mbox{meas}_\R(G\cap L_{\xi= x})\geq c_0;
\end{equation*} 
The equation  \eqref{eq: transport-eq-2} is observable on $G$ if and only if there exists a constant $c_0>0$ such that 
\begin{equation*}
      \textrm{ for } a.e. \; \; x\in [0, 2\pi],  \; \mbox{meas}_\R(G\cap L_{\eta= x})\geq c_0.
\end{equation*} 
\end{proposition}

\subsection{Observability implies GCC}\label{sec:obimplyGCC}
Now we consider wave equations, and show the following result; its proof is left in Appendix \ref{sec:app:3}.
\begin{proposition}\label{wave-GCC-nece}
Let $T>0$ and $G\subset [0,T]\times [0, 2\pi]$ be a measurable set. Consider the wave equation \eqref{eq: wave-eq-0}.  If the observability \eqref{eq: wave-ob} holds on the set $G$, then $G$ satisfies \blackhyperref{def:GCC}{{\rm (GCC)}}.
\end{proposition}

\subsection{GCC implies weak observability}\label{sec: weak-ob-sec}
Recall the spaces  of zero-mean functions/distributions, $\dot L^2(\T)\subset L^2(\T), \dot H^1(\T)\subset H^1(\T)$, 
and $\dot H^{-1}(\T)=(\dot H^1(\T))'$. In the classical cylindrical setting $[0,T]\times\omega$, GCC implies a weak observability. We establish such a standard property in the spacetime measurable setting.
\begin{proposition}[Weak observability]\label{prop: weak-wave-ob}
Let $T>0$. Let a measurable set $G\subset[0,T]\times[0, 2\pi]$  satisfying \blackhyperref{def:GCC}{$({\rm GCC})$}. Then there exists a constant $C>0$ such that
\begin{equation}\label{eq: weak-ob-wave}
\|\partial_x u_0\|^2_{ L^2(\T)}+\|u_1\|^2_{L^2(\T)}\leq C\left(\iint_{G}|\partial_tu(t,x)|^2\d x\d t+\|u_0\|^2_{\dot L^2(\T)}+\|u_1\|^2_{H^{-1}(\T)}\right),
\end{equation}
holds for every solution $u$ to the wave equation \eqref{eq: wave-eq-0} with $(u_0, u_1)\in \dot H^1(\T)\times L^2(\T)$.
\end{proposition}

To prove this result, we first derive a high-frequency estimate, which is based on the observability for transport equations; then, we obtain the weak observability using the standard argument.  The detailled  proof is left in Appendix \ref{sec:app:weak-obser-wave}.

\subsection{OSC and GCC imply observability}\label{sec:OSCplusGCCimplOBS}
Combining the unique continuation property, Proposition \ref{cor-GCC-sym},  and the weak observability result, Proposition \ref{prop: weak-wave-ob}, the observability follows from the standard compactness--uniqueness argument introduced by Bardos--Lebeau--Rauch~\cite{BLR-gcc}.
\begin{proposition}\label{prop:osc+GCCtoObs}
Let $T>0$ and $G\subset [0,T]\times [0, 2\pi]$ be a measurable set. If $G$ satisfies  \blackhyperref{def:GCC}{{\rm (GCC)}} and  \blackhyperref{def:OSC}{$({\rm OSC})$},
then the observability \eqref{eq: wave-ob} holds on the set $G$.
\end{proposition}

We argue by contradiction.
Suppose that \eqref{eq: wave-ob} fails. Then, by modulating the zero mode, there exists a sequence of initial data  
$\{(u_{0,n}, u_{1,n}) \in \dot H^1(\mathbb{T}) \times L^2(\mathbb{T})\}_{n\in \N}$, such that  
\[
\|\partial_x u_{0,n}\|_{L^2}^2 + \|u_{1,n}\|_{L^2}^2 = 1,
\]  
and  
\begin{equation}\label{equ-proof-suff-1}
\iint_{G} |\partial_t u_n(t,x)|^2   \d x \d t \to 0 \quad \text{as } n \to \infty,
\end{equation}  
where $u_n$ is the solution of \eqref{eq: wave-eq-0} with initial data $(u_{0,n},u_{1,n})$. By the Banach--Alaoglu theorem, we can extract a subsequence, still denoted by $\{(u_{0,n},u_{1,n})\}_{n\in \mathbb{N}}$, that converges weakly in $\dot H^1(\mathbb{T}) \times L^2(\mathbb{T})$ to some limit $(u_0,u_1)$. 
Moreover, the solutions \{$u_n\}_{n\in \mathbb{N}}$ satisfy that, for each $t\in[0,T]$,
\[
(u_n(t),\partial_t u_n(t))\rightharpoonup (u(t),\partial_t u(t))
\quad \textrm{weakly in } H^1(\T)\times L^2(\T),
\]
where $u$ is the solution of \eqref{eq: wave-eq-0}
with initial data $(u_0,u_1)\in \dot H^1(\mathbb{T}) \times L^2(\mathbb{T})$.

On the one hand, from \eqref{equ-proof-suff-1} we obtain for the limit  
\[
\iint_{G} |\partial_t u|^2\d x \d t = 0,
\]  
which implies $\partial_t u = 0$ almost everywhere in $G$. Then, the unique continuation property,  Proposition \ref{cor-GCC-sym},  yields that $u$ is a constant. This implies that $u_0= u_1= 0$.

On the other hand, since $u_n$ solves \eqref{eq: wave-eq-0}, Proposition \ref{prop: weak-wave-ob} gives the weak observability estimate  
\begin{equation}\label{equ-proof-suff-2}
1 = \|\partial_x u_{0,n}\|_{L^2}^2 + \|u_{1,n}\|_{L^2}^2
\leq C\Bigl( \iint_{G} |\partial_t u_n(t,x)|^2 \d x \d t
+ \|u_{0,n}\|_{\dot L^2(\mathbb{T})}^2 + \|u_{1,n}\|_{H^{-1}(\mathbb{T})}^2 \Bigr).
\end{equation}
Because the embedding $\dot H^1 \times L^2 \hookrightarrow \dot L^2 \times  H^{-1}$ is compact, the convergence $(u_{0,n}, u_{1,n}) \to (u_0,u_1)$ holds strongly in $\dot L^2(\mathbb{T}) \times  H^{-1}(\mathbb{T})$. Passing to the limit $n \to \infty$ in \eqref{equ-proof-suff-2} yields  
\[
1 \leq C\bigl( \|u_{0}\|_{\dot L^2(\mathbb{T})}^2 + \|u_{1}\|_{H^{-1}(\mathbb{T})}^2 \bigr),
\]  
which contradicts the fact that  $u_0= u_1= 0$. Consequently, the inequality \eqref{eq: wave-ob} must hold.

\subsection{Proof of Theorem \ref{thm: main-ob}} Collect Propositions \ref{prop:UCtoOSC}, \ref{wave-GCC-nece}, and \ref{prop:osc+GCCtoObs}.
\vspace{2mm}

\appendix

\section{Proof of Lemma \ref{lem-Green-3}} 
\label{sec:app:integral}

For any $A, B$ subsets of $[0,2\pi]$,  define 
\begin{align*}
    \Omega^+(B; T) &:= L_{\eta\in B}\cap \left([0, T]\times [0, 2\pi]\right), \\
    \Omega^-(A; T) &:= L_{\xi\in A}\cap \left([0, T]\times [0, 2\pi]\right).
\end{align*}

\begin{lemma}\label{lem-Green-1}
Let $T>0$. Let $A, B \subset [0, 2\pi]$ be measurable sets.
For any function $w \in C^{1}(\mathbb{R}^{2})$ that is $2\pi$-periodic with respect to the $x$ variable, one has
\begin{align*}
\iint_{\Omega^+(B; T)} \bigl( \partial_x w + \partial_t w \bigr) \d x\d t
&= \int_{B+ T} w(x,T)\d x - \int_{B} w(x,0)\d x, \\
\iint_{\Omega^-(A; T)} \bigl( \partial_x w - \partial_t w \bigr)\d x\d t
&= \int_{A} w(x,0)\d x - \int_{A-T} w(x,T)\d x,
\end{align*}
 where $A-T = \{x-T \textrm{ mod } 2\pi: x \in A\}, B+ T = \{x+ T \textrm{ mod } 2\pi : x \in B\}$
\end{lemma}
For the case that $A, B$ are open intervals, this lemma is a direct consequence of Green's formula and the $2\pi$-periodicity of the function. As for the case that $A, B$ are measurable sets, one can approximate $A$ and $B$ by the unions of finitely many open intervals, and obtain the result by passing the limit. 

\vspace{2mm}
Now, we proceed with the proof of Lemma \ref{lem-Green-3}.
By a standard density argument, it suffices to consider solutions that are regular enough, namely, $G$ is the union of finitely many open cubes,  $f \, \chi_{G}\in C([0, T]\times [0, 2\pi])$, and $u\in C^2([0, T]\times [0, 2\pi])$. 

Applying Lemma \ref{lem-Green-1} to $w= U$, we obtain
\[
\iint_{\Omega^-(A; T)} \bigl( \partial_x U - \partial_t U \bigr) \d x \d t
= \int_{A} U(x,0)\d x - \int_{A-T} U(x,T) \d x.  \] 
By the definition of $U$, using the wave equation, we have
$$
\partial_x U - \partial_t U= (\partial_x^2-\partial_t^2)u= -f \chi_G \quad \mbox{ in } [0, T]\times [0, 2\pi].
$$
Thus
$$
\int_{A} U(x,0)\d x - \int_{A-T} U(x,T)\d x=
-\iint_{\Omega^-(A; T)}f \chi_G \d x \d t=-\iint_{G\cap L_{\xi\in A}^T} f \d x \d t.
$$
This proves the first equality in Lemma \ref{lem-Green-3}. 
Similarly, we obtain the second equality. 
\vspace{1mm}

\section{Some results on transport equations}\label{proof-lem-char}
\subsection{Proof of Lemma \ref{lem-char}}
The following result is classical; it can be proved either by Fourier series
or by the method of characteristics combined with a density argument.
\begin{lemma}\label{lem-tran-so}
Let $h\in L^2(\T)$. Then the Cauchy problem
\[
\partial_t w \pm \partial_x w = 0, \qquad w(0,\cdot)=h(\cdot),
\]
admits a unique solution $w\in C(\R;L^2(\T))$, which is explicitly given by
\[
w(t,\cdot)=h(\cdot\mp t)\quad \text{in } L^2(\T), \qquad \forall\, t\in\R.
\]
In particular,
\[
w(t,x)=h(x\mp t)\quad \text{for a.e. }(t,x)\in\R\times\T.
\]
\end{lemma}

\begin{proof}[Proof of Lemma \ref{lem-char}]
We only prove the result for $w:=\partial_\eta u$. According to the wave equation, we know $\partial_\xi w=0$, or equivalently 
$$
\partial_t w+\partial_x w=0, \quad w(0, \cdot)= g(\cdot)\in L^2(\T)
$$
where $g= (- \partial_t+ \partial_x)u|_{t=0}$. Thanks to Lemma \ref{lem-tran-so}, 
$$
w(t,x)=g(x-t) \quad \mbox{ for a.e. } (t,x)\in \R\times \T.
$$
Thus, under the null coordinate
$$
\partial_{\eta}u (\xi, \eta)= g(\eta) \quad \mbox{ for a.e. } (\xi, \eta)\in \R\times \R.
$$
Then Fubini's theorem implies the result on $\eta$-characteristics.
\end{proof}

\subsection{Proof of Proposition \ref{thm-tran-suff-nece}} Now, we demonstrate this result concerning the necessary and sufficient condition for the observability of transport equations.
We only prove the observability for \eqref{eq: transport-eq}. 
Let $\widehat{w}(k)$ be the $k$-th Fourier coefficient of $w$ defined as
$
\widehat{w}(k)=\frac{1}{2 \pi} \int_0^{2 \pi} w(x) \ee^{-\ii k x} \d x.
$ 
We write the initial state $w_0$ and its associated solution $w$ to \eqref{eq: transport-eq} into Fourier series
\[
w_0(x)=\sum_{k\in\Z}\widehat{w_0}(k)\ee^{\ii kx},\; w(t,x)=\sum_{k\in\Z}\widehat{w_0}(k)\ee^{\ii k(t+x)}.
\]
Then the observability inequality \eqref{equ-tran-suff-necc-1}
is equivalent to
\[
\sum_{k\in\Z}|\widehat{w_0}(k)|^2\leq C\iint_G|\sum_{k\in\Z}\widehat{w_0}(k)\ee^{\ii k(t+x)}|^2\d x\d t.
\]

\begin{lemma}\label{prop: tran-eq-ob}
Let $T>0$ and $G\subset [0,T]\times [0, 2\pi]$ satisfy \blackhyperref{def:GCC}{{\rm (GCC)}}. Then for any   $\{a_k\}_{k\in\Z}\in l^2(\mathbb{C})$, we have
\begin{align}
\sqrt{2}\pi c_0\sum_{k\in\Z}|a_k|^2&\leq \iint_G|\sum_{k\in\Z}a_k\ee^{\ii k(t+x)}|^2\d x\d t,\label{eq: ob-transport}\\
\sqrt{2}\pi c_0\sum_{k\in\Z}|a_k|^2&\leq \iint_G|\sum_{k\in\Z}a_k\ee^{-\ii k(t-x)}|^2\d x\d t.\label{eq: ob-transport-2}
\end{align}
\end{lemma}
\begin{proof}
In this proof, we identify $(t,x+2k\pi)$ with $(t,x)$ for all $k\in\Z$.
By Fubini's theorem, we have
\begin{align*}
\iint_G|\sum_{k\in\Z}a_k\ee^{\ii k(t+x)}|^2\d x\d t&=\int_{\T}\int_0^T|\sum_{k\in\Z}a_k\ee^{\ii k(t+x)}|^2\chi_G(t,x)\d t\d x\\
&=\int_{\T}|\sum_{k\in\Z}a_k\ee^{\ii ky}|^2\int_0^T\chi_G(t,y-t)\d t\d y.
\end{align*}
Using \blackhyperref{def:GCC}{({\rm GCC})}, we know that for a.e. $y\in\T$, $\sqrt{2}\int_0^T \chi_G(t,y-t)\d t\geq c_0$. Therefore, 
\[
\sqrt{2} |\sum_{k\in\Z}a_k\ee^{\ii ky}|^2\int_0^T \chi_G(t,y-t)\d t\geq c_0|\sum_{k\in\Z}a_k\ee^{\ii ky}|^2,\; \mbox{ for a.e. } y\in\T.
\]
By Plancherel's theorem, $\int_{\T}|\sum_{k\in\Z}a_k\ee^{\ii ky}|^2\d y=
2\pi\sum_{k\in\Z}|a_k|^2$. Consequently, we obtain the observability \eqref{eq: ob-transport}.
The inequality \eqref{eq: ob-transport-2} follows similarly.
\end{proof}

\begin{proof}[Proof of Proposition \ref{thm-tran-suff-nece}]
We only prove the result corresponding to  \eqref{eq: transport-eq}, since the other equation can be treated in the same way. The ``if'' part  follows directly from Lemma \ref{prop: tran-eq-ob}. We now prove the ``only if'' part. Suppose that for any $\varepsilon>0$, there exists a set $E_\varepsilon\subset \T$ with positive measure $|E_\varepsilon|>0$ such that
\begin{equation}\label{equ-tran-suff-necc-3} 
      \textrm{ for } a.e. \; \; x\in E_{\varepsilon},  \; \mbox{meas}_\R(G\cap L_{\xi= x})< \varepsilon.
\end{equation}
Let $w_{0,  \varepsilon}(x)=\chi_{E_\varepsilon}(x)$. Then $w_{0, \varepsilon}\in L^2(\T)$ and its support is contained in $E_\varepsilon$. Then, similar to the proof in Lemma \ref{prop: tran-eq-ob}, we have
\begin{align*}
\iint_G|w(t,x)|^2\d x \d t =\int_{\T}|w_{0, \varepsilon}(x)|^2\int_0^T\chi_G(t,x-t)\d t\d x\leq 2\pi \varepsilon \|w_{0, \varepsilon}\|^2_{L^2(\T)}.  
\end{align*}
Thus the transport equation is not observable. Hence, the observability of \eqref{eq: transport-eq} implies the existence of $c_0>0$ such that for
$ a.e. \; x\in [0, 2\pi],  \; \mbox{meas}_\R(G\cap L_{\xi= x})\geq c_0$.
\end{proof}
\vspace{2mm}

\section{Proof of Proposition \ref{wave-GCC-nece}}\label{sec:app:3}
We only prove the following result for the $\xi$-direction, while the $\eta$-direction can be proved similarly: there exists  a constant $c_0>0$ such that 
\begin{equation}\label{eq:app:D:1}
      \textrm{ for } a.e. \; \; x\in [0, 2\pi],  \; \mbox{meas}_\R(G\cap L_{\xi= x})\geq c_0.
\end{equation}  

Suppose that the observability inequality \eqref{eq: wave-ob} holds for the wave equation \eqref{eq: wave-eq-0} on $G$, 
\begin{equation}\label{eq:app:D:ob:wave}
\|\partial_x u_{0}\|^2_{L^2(\T)}+\|u_1\|^2_{L^2(\T)}\leq C_1\iint_G|\partial_tu(t,x)|^2\d x\d t,\quad \forall (u_0, u_1)\in  H^1(\T)\times L^2(\T).
\end{equation}
We first show that the transport equation 
\begin{equation}\label{eq:app:C:tran:1}
(\partial_t-\partial_x)w=0,\quad w|_{t=0}= w_0\in  \dot L^2(\T)
\end{equation}
satisfies the following observability inequality on $G$,
\begin{align}\label{eq:app:D:ob:tran}
\|w_0\|^2_{L^2(\T)}\leq  \frac{C_1}{2}\iint_G|w(t,x)|^2\d x \d t, \quad \forall w_0 \in  \dot L^2(\T).
\end{align}
Note that this inequality differs from \eqref{equ-tran-suff-necc-1}: in \eqref{eq:app:D:ob:tran} the initial data belongs to $ \dot L^2(\T)$, whereas in \eqref{equ-tran-suff-necc-1} it belongs to $L^2(\T)$.
\vspace{1mm}

Indeed, for any $w_0\in  \dot L^2(\T)$, we are able to find $(u_0, u_1)\in \dot H^1(\T)\times L^2(\T)$ such that $(\partial_x u_{0}, u_1)= (w_0, w_0)$. It satisfies
\begin{equation*}
  \partial_x u_{0}+ u_1= 2 w_0, \quad  \partial_x u_{0}- u_1= 0.
\end{equation*}
Let $u$ be the unique solution of the wave equation \eqref{eq: wave-eq-0} with initial state $(u_0, u_1)$. Define 
\begin{equation*}
w(t, x):= \partial_{\xi} u(t, x), \; \quad \forall (t, x)\in [0, T]\times \T.
\end{equation*}
Then $w$ is the unique solution of the transport equation \eqref{eq:app:C:tran:1} with initial state $w_0$. Using the observability inequality \eqref{eq:app:D:ob:wave} for $u$, and fact that $w= \partial_{\xi}u= \partial_t u$ in $[0, T]\times \T$,  we obtain \eqref{eq:app:D:ob:tran}.
\vspace{1mm}

Next, we show that inequality \eqref{eq:app:D:ob:tran} implies the existence of $c_0$ such that \eqref{eq:app:D:1} holds. The argument is similar to the ``only if'' part of the proof of Proposition \ref{thm-tran-suff-nece}. It suffices to modify the definition of $w_{0, \varepsilon}$ so that  $w_{0, \varepsilon}\in  \dot L^2(\T)$. This yields  \eqref{eq:app:D:1}. Therefore, $G$  satisfies \blackhyperref{def:GCC}{({\rm GCC})}.
\vspace{1mm}

\section{Proof of Proposition \ref{prop: weak-wave-ob}}\label{sec:app:weak-obser-wave}
We first provide the following result.
\begin{proposition}[High-frequency estimates]\label{prop: high-ob-app}
Let $T>0$ and $G\subset[0,T]\times [0, 2\pi]$  satisfy \blackhyperref{def:GCC}{$({\rm GCC})$}. Then there exist constants $N>0$ and $C>0$ depending only on $T$ and $G$, such that for every $(u_0,u_1)\in H^1(\T)\times L^2(\T)$ with $\supp \widehat{u_0} \subset\{|k|>N\}$ and $\supp \widehat{u_1} \subset\{|k|>N\}$, we have
\begin{equation}\label{eq: high-est}
\|\partial_x u_0\|^2_{L^2(\T)}+\|u_1\|^2_{L^2(\T)}\leq C\iint_{G}|\partial_tu(t,x)|^2\d t\d x,
\end{equation}
where $u$ is the unique solution to \eqref{eq: wave-eq-0} with initial data $(u_0,u_1)$.
\end{proposition}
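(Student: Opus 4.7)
The plan is to work in null coordinates $\xi=x+t$, $\eta=x-t$ and to split the energy into its two transport components. For a free wave, $v:=\partial_\xi u$ depends only on $\xi$, $w:=\partial_\eta u$ depends only on $\eta$, both are $2\pi$-periodic, $u_t=v-w$, and
\[
\|u_0\|_{\dot H^1(\T)}^2+\|u_1\|_{L^2(\T)}^2 \;=\; 2\bigl(\|v\|_{L^2(\T)}^2+\|w\|_{L^2(\T)}^2\bigr).
\]
Expanding gives
\[
\int_G|u_t|^2\,\d t\,\d x \;=\; \int_G|v|^2\,\d t\,\d x + \int_G|w|^2\,\d t\,\d x - 2\,\mathrm{Re}\int_G v\,\bar w\,\d t\,\d x,
\]
so the task reduces to bounding the two diagonal terms from below and the cross term from above.

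The diagonal terms are handled by a direct transport-observability argument. Changing variables to $(\xi,\eta)$ (Jacobian $1/2$) and using that $v$ depends only on $\xi$,
\[
\int_G|v|^2\,\d t\,\d x \;=\; \tfrac12\int_\R |v(\xi)|^2\,\mu_\xi(\xi)\,\d\xi, \qquad \mu_\xi(\xi_0):=\mbox{meas}_\R\{\eta:(\xi_0,\eta)\in G\}.
\]
Parametrising $L_{\xi=\xi_0}$ by $t$ identifies $\mu_\xi(\xi_0)=2\int_0^T\mathbf 1_G(s,\xi_0-s)\,\d s$, which by \eqref{eq:GCC:measure} is $\geq 2c_0$ for a.e.\ $\xi_0\in\T$. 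Folding the $\xi$-integral by periodicity of $v$ then yields $\int_G|v|^2\gtrsim c_0\|v\|_{L^2(\T)}^2$, and the same argument on $\eta$-characteristics gives the companion estimate for $w$.

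The essential step is the cross term, where \blackhyperref{def:OSC}{$({\rm OSC})$}-type cancellations are possible at low frequencies but should vanish at high ones. Writing $v=\sum_{|k|>N}V_k\,\ee^{\ii k\xi}$ and $w=\sum_{|k'|>N}W_{k'}\,\ee^{\ii k'\eta}$ (both spectra avoid $0$ since $u_0\in\dot H^1$, and $|k|\leq N$ by hypothesis),
\[
\int_G v\,\bar w\,\d t\,\d x \;=\; \sum_{|k|,|k'|>N} V_k\,\overline{W_{k'}}\,M_{k,k'}, \qquad M_{k,k'}:=\int_G\ee^{\ii k\xi-\ii k'\eta}\,\d t\,\d x.
\]
Folding $\mathbf 1_G$ periodically in $(\xi,\eta)$ produces an $L^\infty(\T^2)$ function whose Fourier coefficients are, up to constants, the $M_{k,k'}$; hence $\sum_{k,k'}|M_{k,k'}|^2<\infty$ and $\varepsilon_N:=\bigl(\sum_{|k|,|k'|>N}|M_{k,k'}|^2\bigr)^{1/2}\to 0$ as $N\to\infty$. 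A Hilbert--Schmidt bound on the bilinear form gives $|\int_G v\,\bar w\,\d t\,\d x|\leq \varepsilon_N\|v\|_{L^2(\T)}\|w\|_{L^2(\T)}$, and choosing $N$ so large that $\varepsilon_N<c_0/2$ closes the estimate. The main technical obstacle I foresee is the periodic-folding bookkeeping: the set $G\subset[0,T]\times\T$ becomes a bounded subset of $\R^2$ in $(\xi,\eta)$ coordinates, and one has to fold it separately in each variable while verifying both that the folded slice lengths retain the GCC lower bound and that the folded indicator sits in $L^2(\T^2)$ so that Parseval and Riemann--Lebesgue apply. None of this is deep, but with no regularity on $G$ beyond measurability it must be carried out with care, and the coordinate conventions must remain consistent between the transport step and the cross-term step.
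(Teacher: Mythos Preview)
Your proposal is correct and follows essentially the same strategy as the paper: decompose $u_t$ into its two transport components, bound the diagonal terms from below via the transport observability coming from \blackhyperref{def:GCC}{({\rm GCC})}, and absorb the cross term using Cauchy--Schwarz together with the $\ell^2$-summability of the Fourier coefficients of $\mathbf{1}_G$. The only cosmetic difference is that the paper carries out the computation directly in $(t,x)$-Fourier series after normalizing $T=2\pi$, whereas your periodic folding in $(\xi,\eta)$ handles general $T$ more cleanly; otherwise the two arguments coincide step for step.
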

\begin{proof}
By conservation of energy and the time-translation invariance of the wave equation on $\T$,
it suffices to establish \eqref{eq: high-est} for $T\le 2\pi$.
Indeed, for larger times the interval $[0,T]$ can be decomposed into finitely many
subintervals of length at most $2\pi$.
Moreover, since the wave equation is diagonal in Fourier variables,
the high-frequency localization of the initial data is preserved in time,
so the frequency assumption remains valid on each subinterval.

Suppose $\supp \widehat{u_0} \subset\{|k|>N\}$ and $\supp \widehat{u_1}\subset\{|k|>N\}$. We write
\begin{gather*}
u_0(x)=\sum_{|k|>N}\widehat{u_0}(k)\ee^{\ii kx}, \quad u_1(x)=\sum_{|k|>N}\widehat{u_1}(k)\ee^{\ii kx}.
\end{gather*}
Therefore, the solution to \eqref{eq: wave-eq-0} has the decomposition
\begin{gather*}
u(t,x)=\sum_{|k|>N}\left(\frac{\ii k\widehat{u_0}(k)+\widehat{u_1}(k)}{2\ii k}\ee^{\ii k(t+x)}+\frac{\ii k\widehat{u_0}(k)-\widehat{u_1}(k)}{2\ii k}\ee^{-\ii k(t-x)}\right).
\end{gather*}
Then we can rewrite the   right-hand side of \eqref{eq: high-est} as
\begin{align*}
\|\partial_tu\|_{L^2(G)}^2=\iint_G\Big|\sum_{|k|>N}\Big(\ii k\frac{\ii k\widehat{u_0}(k)+\widehat{u_1}(k)}{2\ii k}\ee^{\ii k(t+x)}-\ii k\frac{\ii k\widehat{u_0}(k)-\widehat{u_1}(k)}{2\ii k}\ee^{-\ii k(t-x)}\Big)\Big|^2\d x\d t\\
=\frac{1}{4}\iint_G\Big|\sum_{|k|>N}\left((\ii k\widehat{u_0}(k)+\widehat{u_1}(k))\ee^{\ii k(t+x)}-(\ii k\widehat{u_0}(k)-\widehat{u_1}(k))\ee^{-\ii k(t-x)}\right)\Big|^2\d x\d t.    
\end{align*}
Expand the square term as  
\begin{align*}
&\iint_G\Big|\sum_{|k|>N}\left((\ii k\widehat{u_0}(k)+\widehat{u_1}(k))\ee^{\ii k(t+x)}-(\ii k\widehat{u_0}(k)-\widehat{u_1}(k))\ee^{-\ii k(t-x)}\right)\Big|^2\d x \d t = I_1+I_2+I_3+I_4,
\end{align*}
where $I_j, j=1,2,3,4,$ are given by
\begin{align*}
I_1&=\iint_{G}\big|\sum_{|k|>N}(\ii k\widehat{u_0}(k)+\widehat{u_1}(k))\ee^{\ii k(t+x)}\big|^2\d x\d t,\\
I_2&=-\iint_{G} \sum_{|k|,|l|>N}(\ii k\widehat{u_0}(k)-\widehat{u_1}(k))\overline{(\ii l\widehat{u_0}(l)+\widehat{u_1}(l))}\ee^{-\ii l(t+x)}\ee^{-\ii k(t-x)}\d x\d t,\\
I_3&=-\iint_{G}\sum_{|k|,|l|>N}\overline{(\ii k\widehat{u_0}(k)-\widehat{u_1}(k))} (\ii l\widehat{u_0}(l)+\widehat{u_1}(l))\ee^{\ii l(t+x)}\ee^{\ii k(t-x)}\d x\d t= \overline{I_2},\\
I_4&=\int_{G}\big|\sum_{|k|>N}(\ii k\widehat{u_0}(k)-\widehat{u_1}(k))\ee^{-\ii k(t-x)}\big|^2\d x\d t.
\end{align*}
We first note that the dominating terms are $I_1$ and $I_4$. Using the estimates \eqref{eq: ob-transport} and \eqref{eq: ob-transport-2},
$$
I_1\geq \sqrt{2}\pi c_0\sum_{|k|>N}\big|\ii k\widehat{u_0}(k)+\widehat{u_1}(k)\big|^2,\;\;I_4\geq \sqrt{2}\pi c_0\sum_{|k|>N}\big|\ii k\widehat{u_0}(k)-\widehat{u_1}(k)\big|^2.
$$
It follows that $I_1+I_4\geq 2\sqrt{2}\pi c_0\sum_{|k|>N}(|k|^2\widehat{u_0}(k)|^2+|\widehat{u_1}(k)|^2)$. 
\vspace{2mm}

Next, we turn to the mixed terms $I_2$ and $I_3$. Recall that 
 $T\leq 2\pi$. In this case, $G$ is a subset of $[0, 2\pi]\times [0, 2\pi]$. 
Thus $I_2$ can be rewritten as
$$
I_2=-\iint_{[0, 2\pi]^2}\chi_G(t,x)\sum_{|k|,|l|>N}(\ii k\widehat{u_0}(k)-\widehat{u_1}(k))\overline{(\ii l\widehat{u_0}(l)+\widehat{u_1}(l))}\ee^{-\ii l(t+x)}\ee^{-\ii k(t-x)}\d x\d t.
$$
From now on, we regard the restriction $\chi_{G}|_{[0, 2\pi]^2}$ as an $L^2(\T^2)$ function, and simply denote it by $\chi_{G}$.
This implies that
\begin{align*}
I_2=-\sum_{|k|,|l|>N}(\ii k\widehat{u_0}(k)-\widehat{u_1}(k))\overline{(\ii l\widehat{u_0}(l)+\widehat{u_1}(l))}\widehat{\chi_G}(l+k,l-k).
\end{align*}
By Cauchy--Schwarz inequality, we obtain 
\begin{align*}
|I_2|\leq &\left(\sum_{|k|,|l|>N}|(\ii k\widehat{u_0}(k)-\widehat{u_1}(k))\overline{(\ii l\widehat{u_0}(l)+\widehat{u_1}(l))}|^2\right)^{\frac{1}{2}}\left(\sum_{|k|,|l|>N}|\widehat{\chi_G}(l+k,l-k)|^2\right)^{\frac{1}{2}}\\
\leq &2\sum_{|k|>N}\left( |k|^2|\widehat{u_0}(k)|^2+|\widehat{u_1}(k)|^2\right) \left(\sum_{|k|,|l|>N}|\widehat{\chi_G}(l+k,l-k)|^2\right)^{\frac{1}{2}}.
\end{align*}
Let $k+l=\alpha_1$, $l-k=\alpha_2$, then
\[\{(\alpha_1,\alpha_2)\in\Z^2:|\alpha_1+\alpha_2|>2N,|\alpha_1-\alpha_2|>2N\}\subset\{(\alpha_1,\alpha_2)\in\Z^2:\alpha_1^2+\alpha_2^2>4 N^2\}.
\]
Using the Plancherel theorem, we know that $\sum_{\alpha\in\Z^2}|\widehat{\chi_G}(\alpha)|^2= \|\chi_G\|_{L^2(\T^2)}^2=\mbox{meas}_{\R^2}(G)$, which is finite. Thus, there exists a constant $N>0$ depending only on $G$  such that
\[
\left(\sum_{|\alpha|^2> 4N^2,\alpha\in\Z^2}|\widehat{\chi_G}(\alpha)|^2\right)^{\frac{1}{2}}\leq \frac{c_0}{20}.
\]
With this choice of $N$, we can bound $I_2$ as
$
|I_2| \leq \frac{c_0}{10}\sum_{|k|>N}( |k|^2|\widehat{u_0}(k)|^2+|\widehat{u_1}(k)|^2).
$
Similar estimate holds for $I_3$. Combining the above bounds together, we infer the high-frequency estimate 
\begin{align}\label{eq: high-est-wave-Fourier} 
&\iint_G|\sum_{|k|>N} \left((\ii k\widehat{u_0}(k)+\widehat{u_1}(k))\ee^{\ii k(t+x)}-(\ii k\widehat{u_0}(k)-\widehat{u_1}(k))\ee^{-\ii k(t-x)}\right)|^2\d x\d t\nonumber \\ &\geq I_1+I_4-|I_2|-|I_3|
\geq (2\sqrt{2}\pi-\frac{1}{5})c_0 \left( |k|^2|\widehat{u_0}(k)|^2+|\widehat{u_1}(k)|^2\right).
\end{align}
By the Plancherel theorem, we compute the left-hand side  of \eqref{eq: high-est} as
\begin{align*}
\|\partial_x u_0\|^2_{L^2(\T)}+\|u_1\|^2_{L^2(\T)}=\sum_{|k|>N}\left(|k|^2|\widehat{u_0}(k)|^2+|\widehat{u_1}(k)|^2\right).
\end{align*}
This, together with \eqref{eq: high-est-wave-Fourier}, gives \eqref{eq: high-est}.
\end{proof}

\begin{proof}[Proof of Proposition \ref{prop: weak-wave-ob}]
Based on the high-frequency estimates in Proposition \ref{prop: high-ob-app}, we can obtain an observability inequality in $G$ up to a compact term.

Let $N$ be the same as in Proposition \ref{prop: high-ob-app}. For $(u_0,u_1)\in  \dot H^1(\T)\times L^2(\T)$, we decompose them into 
\begin{equation*}
u_0=P_Nu_0 +P^Nu_{0}, \quad u_1=P_Nu_1+P^Nu_{1},
\end{equation*}
where $P_N, P^N$ denote the low and high frequency projection, namely
$$
P_N\phi= \sum_{|k|\leq N}c_k \ee^{\ii k x} \quad \mbox{ if } \phi=\sum_{k\in \Z}c_k \ee^{\ii kx}
$$
and $P^N:= Id-P_N$.
If $u$ is a solution to \eqref{eq: wave-eq-0}, then $P^N u$ and $P_Nu$ are the solutions to \eqref{eq: wave-eq-0} with initial states $(P^Nu_0,P^Nu_1)$ and $(P_Nu_{0},P_Nu_{1})$, respectively. Therefore
\begin{align*}
\|\partial_x u_0\|^2_{L^2(\T)}+\|u_1\|^2_{L^2(\T)}=\|\partial_x(P^Nu_0)\|^2_{ L^2(\T)}+\|P^Nu_1\|^2_{L^2(\T)}+\|\partial_x(P_N u_{0})\|^2_{ L^2(\T)}+\|P_Nu_{1}\|^2_{L^2(\T)}.
\end{align*}
Using the high-frequency estimate \eqref{eq: high-est}, we have
\begin{equation*}
\|\partial_x u_0\|^2_{ L^2(\T)}+\|u_1\|^2_{L^2(\T)}\leq C\iint_G|\partial_t (P^N u)(t,x)|^2\d x\d t+\|\partial_x(P_N u_{0})\|^2_{ L^2(\T)}+\|P_Nu_{1}\|^2_{L^2(\T)}.
\end{equation*}
Due to the linear superposition of $u=P^Nu+P_Nu$, we derive that
\begin{multline*}
\|\partial_x u_0\|^2_{ L^2(\T)}+\|u_1\|^2_{L^2(\T)}\\
\leq 2C\iint_G|\partial_t u|^2\d x\d t+2C\iint_G|\partial_t (P_N u)|^2\d x\d t+\|\partial_x(P_N u_{0})\|^2_{ L^2(\T)}+\|P_N u_{1}\|^2_{L^2(\T)}.
\end{multline*}
Using the low-frequency truncation and the conservation of energy for wave equations, we deduce that
\begin{align*}
&2C\int_G|\partial_t (P_N u)(t,x)|^2\d x\d t+\|\partial_x(P_N u_{0})\|^2_{ L^2(\T)}+\|P_Nu_{1}\|^2_{L^2(\T)}\\
&\leq (2 CT + 1)\left(\|\partial_x(P_N u_{0})\|^2_{ L^2(\T)}+\|P_Nu_{1}\|^2_{L^2(\T)}\right)\\
&\leq (2CT+ 1)N^2\left(\|P_N u_{0}\|^2_{ \dot L^2(\T)}+\|P_Nu_{1}\|^2_{H^{-1}(\T)}\right).
\end{align*}
As a consequence, we obtain the weak observability up to a compact term:
\begin{equation*}
\|\partial_x u_0\|^2_{L^2(\T)}+\|u_1\|^2_{L^2(\T)}\leq C\left(\iint_G|\partial_t u|^2\d x\d t+\|P_N u_{0}\|^2_{\dot L^2(\T)}+\|P_Nu_{1}\|^2_{H^{-1}(\T)}\right).
\end{equation*}
This finishes the proof of Proposition \ref{prop: weak-wave-ob}.
\end{proof}
\vspace{1mm}

\section{A functional equivalence between two observability inequalities}\label{sec: HUM-app} 
In this appendix, we recall a standard observability inequality for the wave equation in a lower-regularity setting, and explain its equivalence with \eqref{eq: wave-ob}. 
This equivalence is classical and follows from a straightforward functional argument, which we include here for completeness.

There exists a constant $C>0$ such that  every solution of 
\begin{equation}\label{eq: wave-low}
(\partial_t^2-\partial_x^2)v=0,\;\;(v,\partial_t v)|_{t=0}=(v_0,v_1)\in L^2(\T)\times \dot H^{-1}(\T),
\end{equation}
satisfies 
\begin{equation}\label{eq: wave-ob-no-derivative}
\|v_0\|^2_{L^2(\T)}+\|v_1\|^2_{\dot H^{-1}(\T)}\leq C\iint_G|v(t,x)|^2\d t\d x.
\end{equation}

\begin{lemma}
The observability inequality \eqref{eq: wave-ob-no-derivative} for \eqref{eq: wave-low} is equivalent to the observability inequality \eqref{eq: wave-ob} for  \eqref{eq: wave-eq-0}.
\end{lemma}
\begin{proof}
We first note that  \eqref{eq: wave-ob} is equivalent to the same inequality restricted to initial data  $(u_0, u_1)\in \dot H^1(\T)\times L^2(\T)$.
Assume that \eqref{eq: wave-ob} holds. Given $(u_0, u_1)\in \dot H^1(\T)\times L^2(\T)$, define  $(v_0, v_1):= (u_1, \partial_x^2 u_0)\in L^2(\T)\times \dot H^{-1}(\T)$. Then the corresponding solution $v$ of \eqref{eq: wave-low} satisfies
\eqref{eq: wave-ob-no-derivative}. 
Conversely, assume that \eqref{eq: wave-ob-no-derivative} holds. For $(v_0, v_1)\in L^2(\T)\times \dot H^{-1}(\T)$, set $(u_0, u_1):= ((\partial_x^2)^{-1} v_1, v_0)\in \dot H^1(\T)\times L^2(\T)$. Then the corresponding solution $u$ of \eqref{eq: wave-eq-0} satisfies
\eqref{eq: wave-ob}.
\end{proof}

\vspace{2mm}

\subsection*{Acknowledgements}
The authors would like to thank Nicolas Burq, Jean-Michel Coron, Gengsheng Wang, and Yubiao Zhang for valuable discussions during the preparation of this manuscript.  Shengquan Xiang is partially supported by the NSFC under grant 12571474, and Ming Wang is partially supported by the NSFC under grant 12571260.

    \normalem
    \bibliographystyle{alpha}
    \bibliography{symmetrywave}

\end{document}